\def\namedlabel#1#2{\begingroup
    #2%
    \def\@currentlabel{#2}%
    \phantomsection\label{#1}\endgroup
}
\numberwithin{equation}{section}
\theoremstyle{plain}
\newtheorem{theorem}{Theorem}[section]
\newtheorem{proposition}[theorem]{Proposition}
\newtheorem{lemma}[theorem]{Lemma}
\newtheorem{corollary}[theorem]{Corollary}
\theoremstyle{definition}
\newcommand{\R}{\mathbb{R}}
\newcommand{\C}{\mathbb{C}}
\newcommand{\X}{\mathbb{X}}
\newcommand{\lin}{\mathcal{L}}
\newcommand{\cA}{\mathcal{A}}
\newcommand{\cg}{\mathcal{G}}
\newcommand{\cI}{\mathcal{I}}
\newcommand{\cP}{\mathcal{P}}
\newcommand{\cQ}{\mathcal{Q}}
\newcommand{\cG}{\mathcal{G}}
\newcommand{\GG}{\mathsf{G}}
\newcommand{\VV}{\mathsf{V}}
\newcommand{\EE}{\mathsf{E}}
\newcommand{\ee}{\mathsf{e}}
\newcommand{\ff}{\mathsf{f}}
\newcommand{\vv}{\mathsf{v}}
\newcommand{\sv}{\mathsf{v}}
\newcommand{\eE}{\ee \in \EE}
\newcommand{\lle}{l_{\ee}}
\newcommand{\rre}{r_{\ee}}
\newcommand{\llef}{l_{\ee,\ff}}
\newcommand{\rref}{r_{\ee,\ff}}
\newcommand{\ffel}{\ff_{\ee}^-}
\newcommand{\ffer}{\ff_{\ee}^+}
\newcommand{\eeinit}{\ee_{\mathrm{init}}}
\newcommand{\eeterm}{\ee_{\mathrm{term}}}
\newcommand{\pee}{p_{\ee}}
\newcommand{\qee}{q_{\ee}}
\DeclareMathOperator{\dom}{\mathcal{D}}
\DeclareMathOperator{\res}{\mathcal{R}}
\renewcommand{\phi}{\varphi}
\newcommand{\e}{\mathrm{e}}
\newcommand{\dd}[1]{\mathop{}\!\mathrm{d}#1}
\newcommand{\lino}[1]{\mathcal{L}(#1)}
\newcommand{\set}[1]{\lrc{#1}}
\DeclareMathOperator{\re}{Re}
\DeclareMathOperator{\range}{range}
\DeclareMathOperator{\Int}{int}
\DeclarePairedDelimiter{\lrp}{\lparen}{\rparen}
\DeclarePairedDelimiter{\lrb}{\lbrack}{\rbrack}
\DeclarePairedDelimiter{\lrc}{\lbrace}{\rbrace}
\DeclarePairedDelimiter{\abs}{\lvert}{\rvert}
\DeclarePairedDelimiter{\norm}{\lVert}{\rVert}
\newcommand{\normt}{\@ifstar\@normts\@normt}
\newcommand{\@normts}[1]{%
  \left|\mkern-1.5mu\left|\mkern-1.5mu\left|
   #1
  \right|\mkern-1.5mu\right|\mkern-1.5mu\right|
}
\newcommand{\@normt}[2][]{%
  \mathopen{#1|\mkern-1.5mu#1|\mkern-1.5mu#1|}
  #2
  \mathclose{#1|\mkern-1.5mu#1|\mkern-1.5mu#1|}
}
\renewcommand{\leq}{\leqslant}
\newcounter{myhypo}
\renewcommand\themyhypo{(H\arabic{myhypo})}
\newtcolorbox{hypo}[1][]{
  breakable,
  enhanced,
  before={\medskip\noindent},
  after={},
  top=0pt,
  bottom=0pt,
  colback=white,
  boxrule=0pt,
  boxsep=0pt,
  left=40pt,
  right=0pt,
  leftrule=0pt,
  colframe=white,
  outer arc=0pt,
  overlay={
    \node[inner sep=0pt,anchor=west]
    at (frame.west)
    {\refstepcounter{myhypo}\themyhypo\label{#1}};
  }, 
}
\title[]{Sticky diffusions on graphs}
\author[A. Gregosiewicz]{Adam Gregosiewicz}
\address{%
  Lublin University of Technology,\ ul.~Nadbystrzycka 38A,\ 20-618 Lublin,
  Poland}
 \email{a.gregosiewicz@pollub.pl}
\keywords{}
\begin{document}

\begin{abstract}
We consider diffusion processes on metric graphs with semipermeable
\emph{sticky} membranes in each vertex.
We prove that the process is governed by a Feller semigroup and find its
asymptotic behavior as diffusion’s speed increases to infinity with the same
rate as permeability coefficients decreases to zero.
\end{abstract}

\maketitle

\section{Introduction}
\label{sec:introduction}

Since around 1980 numerous papers have been published on the topic of evolution
operators acting on metric graphs -- see for example~\cite{kramar2019,mugnolo},
or recent survey~\cite{kramar2020} and references given there.
In this context operators related to the diffusion process are one of the most
extensively examined.
More specifically, let \( \cg \) be a~finite graph without loops, and assume
that there is a~Markov process on \( \cg \), which on each edge behaves like a
Brownian motion with given variance.
Moreover, suppose that each vertex of the graph is a~semipermeable membrane with
given permeability coefficients, that is for each vertex there are nonnegative
numbers, describing the possibility of a~particle passing through the membrane
from an edge \( \ee \) to \( \ff \).
In 2012, A.~Bobrowski and K.~Morawska~\cite{bobrowski-morawska} considered such
diffusion processes on a simple graph to model synaptic depression dynamic.
The results were generalized by Bobrowski in~\cite{bobrowski-diff} to the case
of arbitrary graphs.
He proved that these processes are governed by strongly continuous semigroups of
operators (see, for example,~\cite{MR1721989} for an introduction to the theory
of semigroup of linear operators), and, moreover, that if the diffusion's speed
increases to infinity with the same rate as permeability coefficients decreases
to zero (this is an example of a \emph{small parameter}, or \emph{singular
  perturbation}, problem), then there is a~limit process, in the sense of
Theorem~3.26 in~\cite{liggett2010}, which behaves like a~Markov chain on the
vertices of the line graph of \( \cg \), see Figure~\ref{fig:graph}.
\begin{figure}[b]
\centering
\includegraphics{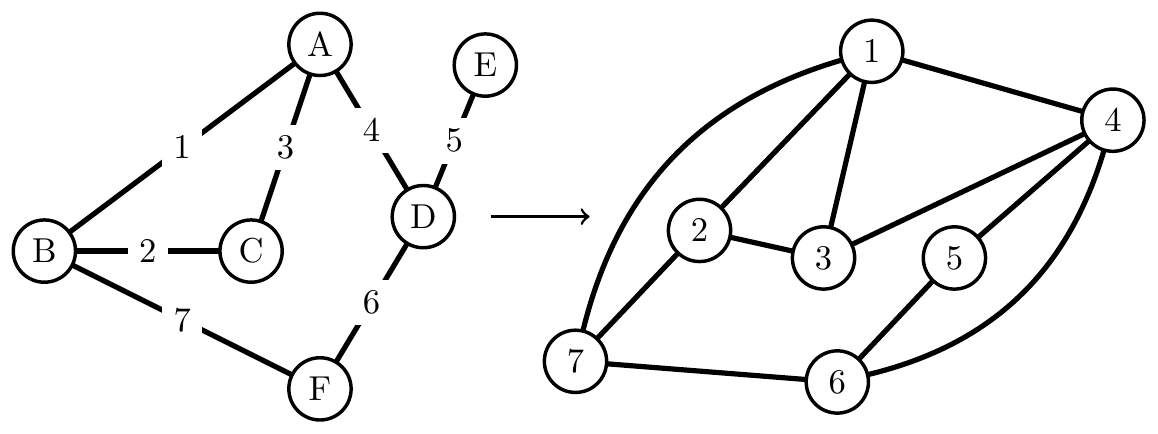}
\caption{Diffusion on a graph \( \cg \) becomes a Markov chain on the vertices
  of the line graph of \( \cg \).}
\label{fig:graph}
\end{figure}
Because in Bobrowski's papers the analysis takes place in the space of
continuous functions on \( \cg \), the related semigroups describe dynamics of
(weighted) conditional expected values of these processes.
It is also worth noting that the communication between edges is based on the
Fick law, or, in other words, that boundary conditions at vertices are
Robin-type.

In 2014, in order to obtain the dynamics of densities of Bobrowski's processes
distributions, we considered in~\cite{gregosiewicz2014} a ``dual'' description of
the processes with the underlying space being the \( L^1 \)-type space of
Lebesgue integrable functions on \( \cg \).
One may wish to mimic the argument of the continuous case but this is not fully
possible (the reason is that a pointwise evaluation is not a bounded functional
in an \( L^1 \)-type space), thus a different method is needed.

These results, in the space of continuous functions and in the \( L^1 \)-type
space, were improved by J.~Banasiak \emph{et
  al.}~\cite{banasiak2016b,banasiak2016}.
Moreover, if we are interested in generation theorems only, then Banasiak's
results follow from a recent and very general scheme developed by T.~Binz and
K.J.~Engel (in the space of continuous functions), and M.~Kramar Fijav\v{z} and
K.J.~Engel (in the \( L^1 \)-type space).


In this paper we generalize results of~\cite{bobrowski-diff} to the case of
\emph{sticky boundary conditions}.
We consider the process that on each edge of the underlying graph behaves as a
Brownian motion, and in each vertex we put a semipermeable \emph{sticky}
membrane; see Example~3.59 in~\cite{liggett2010} or Appendix for detailed
description of this type of boundary condition.
We prove, see Theorem~\ref{thm:main-feller}, that the process is governed by a
Feller semigroup and, see Theorem~\ref{thm:convergence-in-C}, find its
asymptotic behavior as diffusion’s speed increases to infinity with the same
rate as permeability coefficients decreases to zero.

\section{Sticky diffusion on a graph}
\label{sec:sticky-diffusion-on-graph}

Let \( \GG = (\VV,\EE) \) be a finite graph without loops, where
\( \VV = \set{\sv_1,\ldots,\sv_n} \) is the set of vertices and and
\( \EE = \set{\ee_1,\ldots,\ee_m} \) is the set of edges of \( \GG \).
We arbitrarily fix an orientation of \( \GG \) and introduce the \emph{incidence
  matrix} \( \cI = (\iota_{\vv,\ee})_{\vv \in \VV,\ee \in \EE} \) defined by
\[
\iota_{\vv,\ee} =
\begin{cases}
-1, & \text{\( \vv \) is the initial endpoint \( \eeinit \) of \( \ee \)}, \\
+1, & \text{\( \vv \) is the terminal endpoint \( \eeterm \) of \( \ee \)}, \\
0, & \text{otherwise}.
\end{cases}
\]
To define a metric analogue of the discrete object \( \GG \), we assign to each
edge \( \ee \in \EE \) a closed interval, which we normalize to \( [0,1] \) for
simplicity.
We let \( \cG \) to be the disjoint union of these intervals, that is
\[
\cG \coloneqq \bigsqcup_{\ee \in \EE} [0,1],
\]
and denote elements of \( \cG \) by \( \ee(s) \) for all \( \ee \in \EE \) and
\( s \in [0,1] \).
Notice that there can be many `copies' of a single vertex \( \sv \in \VV \) in
\( \cG \), and that \( \cG \) does not itself posses a graph structure -- there
is no information about connections between vertices.
From this point of view, if we want to treat \( \cG \) as a metric graph, we
need to refer to adjacency matrix \( \cI \), for example.
If we endow each interval of \( \cG \) with the standard metric on \( [0,1] \),
then \( \cG \) becomes a disconnected compact metric space.
With slight abuse of notation, we view \( \ee \) as an element of \( \EE \) as
well as a connected component of \( \cG \).
We parametrize each component according to the orientation of the related edge
\( \ee \), and call \( \ee^- \coloneqq \ee(0) \) and
\( \ee^+ \coloneqq \ee(1) \) the \emph{left} and \emph{right endpoints} of (the
interval related to) \( \ee \).
By \( C(\cG) \) we denote the space of continuous function on \( \cG \) with the
standard supremum norm.
This space is isometrically isomorphic to the Cartesian product
\( \bigtimes_{\eE} C[0,1] \), where \( C[0,1] \) is the space of continuous
functions on \( [0,1] \).
Therefore, we may identify \( u \in C(\cG) \) with \( (u_{\ee})_{\eE} \), where
\( u_{\ee} \) is a member of \( C[0,1] \).
Nevertheless, whenever possible, we consider \( u \in C(\cG) \) as a real-valued
function on a disconnected space \( \cG \), and use the edgewise identification
\( u = (u_{\ee})_{\eE} \) when needed.
Such approach is convenient, since when we apply the positive maximum principle
for Feller semigroups, we have to work in the space of real-valued continuous
functions.
Note also that it makes sense to speak about differentiable functions on
\( \cG \), and in particular, by \( C^k(\cG) \) we denote the space of
\( k \)-times continuously differentiable functions on \( \cG \).

Fix \( \epsilon \in (0,1] \), and for every \( \eE \) let \( \sigma_{\ee} \) be
a positive constant.
We define \( \sigma \in C(\cG) \) to be the continuous function on \( \cG \),
which on each edge \( \eE \) equals \( \sigma_{\ee} \), and consider the
diffusion equation
\[
\partial_t u(t,x) = \epsilon \sigma(x) \partial_{xx} u(t,x), \qquad
t > 0,\ x \in \Int \cG,
\]
where \( \Int \cG \) is the interior of \( \cG \), which is the disjoint union of
the intervals \( (0,1) \), that is
\[
\Int \cG = \bigsqcup_{\eE} (0,1).
\]
This means that on the edge \( \ee \) the related process behaves like a
Brownian motion with variance \( \epsilon \sigma_{\ee} \).

To describe the communication between the edges, we
follow~\cite{banasiak2016b,banasiak2016,bobrowski-diff,gregosiewicz2020}, and
for each \( \eE \) we let \( l_{\ee} \) and \( r_{\ee} \) be nonnegative real
numbers giving the rates at which Brownian particles pass through the membrane
from the edge \( \ee \) to the edges incident in the left and right endpoints,
respectively.
Also, for \( \ee,\ff \in \EE \) such that \( \ee \neq \ff \) let \( \llef \) and
\( \rref \) be nonnegative real numbers satisfying
\( \sum_{\ff \neq \ee} \llef \leq \lle \) and
\( \sum_{\ff \neq \ee} \rref \leq \rre \); the summation here is taken over all
\( \ff \in \EE \) such that \( \ff \neq \ee \).
These numbers determine the probability that \emph{after} filtering through the
membrane of the edge \( \ee \) a particle will enter the edge \( \ff \).
More specifically, the probability that a particle after filtering through the
membrane at the left endpoint \( \ee^- \) will enter the edge \( \ff \) equals
\( \llef/\lle \), and, by default, if \( \ff \) is not incident with the initial
endpoint of \( \ee \) in \( \GG \), we put \( \llef = 0 \).
Analogously, \( \rref/\rre \) is the probability that after filtering through
the membrane at \( \ee^+ \) the particle will enter the edge \( \ff \), and
\( \rref = 0 \), provided that \( \ff \) is not incident with the terminal
endpoint of \( \ee \) in \( \GG \).
For \( \ee \neq \ff \) we denote by \( \ffel \) and \( \ffer \) the left and
right, respectively, endpoint of \( \ee \) seen as an endpoint of \( \ff \).
In other words,
\[
\ffel =
\begin{cases}
\ff^-, & \ff_{\mathrm{init}} = \eeinit, \\
\ff^+, & \ff_{\mathrm{term}} = \eeinit, \\
\text{undefined}, & \text{\( \ff \) is not incident to \( \eeinit \)},
\end{cases}
\]
and similarly for \( \ffer \).

With these notations we impose for each \( \eE \) the following sticky
transmission conditions (see Appendix for a detailed description) between the
edges:
\[
p_{\ee} \partial_{xx} u(t,\ee^-) - p_{\ee}^{*} \partial_x u(t,\ee^-) =
\sum_{\ff \in \EE} \epsilon \llef u(t,\ffel) - \epsilon \lle u(t,\ee^-),
\]
and
\[
q_{\ee} \partial_{xx} u(t,\ee^+) + q_{\ee}^{*} \partial_x u(t,\ee^+) =
\sum_{\ff \in \EE} \epsilon \rref u(t,\ffer) - \epsilon \rre u(t,\ee^+),
\]
where \( p_{\ee}, q_{\ee} \in [0,1] \), \( p_{\ee}^{*} \coloneqq 1-p_{\ee} \),
\( q_{\ee} \coloneqq 1-q_{\ee} \).
By convention, if \( \ffel \) or \( \ffer \) is not defined, or, equivalently,
\( \llef = 0 \) or \( \rref = 0 \), we let \( \llef u(t,\ffel) = 0 \) or
\( \rref u(t,\ffer) = 0 \), respectively.

\subsection{Generation theorem}
\label{sec:generation-theorem}

We begin by considering the abstract operator related to the diffusion process.
To this end, let \( \epsilon \in (0,1] \), \( \sigma = (\sigma_{\ee})_{\eE} \)
be a positive continuous function on \( \cG \) that is constant on each edge,
and \( \pee, \qee \in [0,1] \).
We let \( A_{\epsilon} \) to be the operator in \( C(\cG) \) given by
\begin{equation}
\label{eq:Aeps}
A_{\epsilon} u \coloneqq \epsilon^{-1} \sigma u'', \qquad u \in \dom(A_{\epsilon}).
\end{equation}
Recall that we consider \( C(\cG) \) as the space of real-valued continuous
functions on the disconnected space \( \cG \), hence,
\( A_{\epsilon} u(x) = \epsilon^{-1} \sigma(x) u''(x) \) for all
\( x \in \cG \).
We define the domain of \( A_{\epsilon} \) as
\begin{equation}
\label{eq:Adom}
\dom(A_{\epsilon}) \coloneqq \set{u \in C^2(\cG)\colon L u = \epsilon\Phi u},
\end{equation}
where \( L\colon C^2(\cG) \to \R^{2 \abs{ \EE }} \) (\( \abs{ E } \) is the
cardinality of \( E \)) is given by
\[
L u \coloneqq \lrp[\big]{p_{\ee} u''(\ee^-) - (1-p_{\ee}) u'(\ee^-),q_{\ee}
  u''(\ee^+) + (1-q_{\ee}) u'(\ee^+)}_{\eE}, \qquad u \in C^2(\cG),
\]
and \( \Phi\colon C(\cG) \to \R^{2 \abs{ \EE }} \) is the bounded linear
operator
\[
\Phi u = (\Phi_{\ee}^-,\Phi_{\ee}^+)_{\eE} \coloneqq \lrp[\bigg]{ \sum_{\ff \in \EE} \llef
  u(\ffel) - \lle u(\ee^-), \sum_{\ff \in \EE} \rref u(\ffer) - \rre u(\ee^+)
}_{\eE}
\]
for all \( u \in C(\cG) \).
We also denote
\[
\Phi_{\epsilon} \coloneqq \epsilon \Phi.
\]

\begin{theorem}
\label{thm:main-feller}
The operator \( A_\epsilon \) given by~\eqref{eq:Aeps}--\eqref{eq:Adom}
generates a Feller semigroup in \( C(\cG) \).
The semigroup is conservative if and only if
\begin{equation}
\label{eq:conservative}
\sum_{f \in \EE} \llef = \lle \qquad \text{and} \qquad \sum_{f \in \EE} \rref =
\rre
\end{equation}
for all \( \eE \).
\end{theorem}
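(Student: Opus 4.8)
The plan is to characterise $A_\epsilon$ as a Feller generator through the Hille--Yosida--Ray theorem: I will check that $A_\epsilon$ is densely defined, closed, satisfies the positive maximum principle, and that $\range(\lambda - A_\epsilon) = C(\cG)$ for every $\lambda > 0$; these facts together identify $A_\epsilon$ as the generator of a positive contraction $C_0$-semigroup, i.e. a Feller semigroup. Throughout I work with real-valued functions, as the text stresses, so that the positive maximum principle is available. Closedness is routine: if $u_n \to u$ and $A_\epsilon u_n \to v$ in $C(\cG)$, then edgewise $u_n \to u$ and $u_n'' \to \epsilon \sigma^{-1} v$ uniformly, so $u \in C^2(\cG)$ with $u'' = \epsilon \sigma^{-1} v$ and $u_n \to u$ in $C^2$; hence $A_\epsilon u = v$, and the conditions $L u_n = \epsilon \Phi u_n$ pass to the limit because $L$ and $\Phi$ are continuous for $C^2$-convergence.

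The core of the argument, and the step I expect to be the main obstacle, is the positive maximum principle. Let $u \in \dom(A_\epsilon)$ attain a nonnegative maximum $M = \max_{\cG} u \ge 0$ at a point $x_0$; I must show $A_\epsilon u(x_0) = \epsilon^{-1}\sigma(x_0) u''(x_0) \le 0$, equivalently $u''(x_0) \le 0$ since $\sigma > 0$. If $x_0 \in \Int \cG$ this is the elementary second-derivative test. The delicate case is $x_0 = \ee^-$ for some $\eE$ (the endpoint $\ee^+$ being symmetric). Since the maximum is attained at the left endpoint, the one-sided derivative obeys $u'(\ee^-) \le 0$; and using $u(\ffel) \le M = u(\ee^-)$ together with $\sum_{\ff \in \EE} \llef \le \lle$ and $M \ge 0$, the right-hand side of the transmission condition satisfies
\[
\Phi_\ee^- u = \sum_{\ff \in \EE} \llef u(\ffel) - \lle u(\ee^-) \le \bigl( \sum_{\ff \in \EE} \llef - \lle \bigr) M \le 0 .
\]
The boundary condition $p_\ee u''(\ee^-) - (1-p_\ee) u'(\ee^-) = \epsilon \Phi_\ee^- u$ then gives $p_\ee u''(\ee^-) = \epsilon \Phi_\ee^- u + (1-p_\ee) u'(\ee^-) \le 0$. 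When $p_\ee > 0$ this yields $u''(\ee^-) \le 0$ directly. The degenerate sticky case $p_\ee = 0$ is the genuinely tricky one: there the second-order term drops out, the condition collapses to $-u'(\ee^-) = \epsilon \Phi_\ee^- u \le 0$, so $u'(\ee^-) = 0$, and $x_0$ becomes a one-sided critical maximum of $u_\ee$, whence the second-order necessary condition forces $u''(\ee^-) \le 0$. This disposes of all cases.

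For the range condition I first note that the positive maximum principle makes $\lambda - A_\epsilon$ injective for every $\lambda > 0$: if $(\lambda - A_\epsilon)u = 0$ with $u \ne 0$, applying the principle to whichever of $u$ and $-u$ attains a positive maximum $M$ produces the contradiction $0 < \lambda M = A_\epsilon(\pm u)(x_0) \le 0$. Surjectivity then follows by a finite-dimensional reduction, which is the conceptual shortcut that avoids computing any boundary determinant. Fix $\lambda > 0$ and $g$; on each edge $\lambda u_\ee - \epsilon^{-1}\sigma_\ee u_\ee'' = g_\ee$ has a two-dimensional affine solution set, so the edgewise solutions ignoring the transmission conditions form an affine space $u^{g} + V$, where $V \subset C^2(\cG)$ is the $2\abs{\EE}$-dimensional space of solutions of the homogeneous equation $\lambda v = \epsilon^{-1}\sigma v''$. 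Imposing the $2\abs{\EE}$ conditions amounts to solving $(L - \epsilon\Phi)(u^{g} + v) = 0$ for $v \in V$; the linear part $(L - \epsilon\Phi)|_V \colon V \to \R^{2\abs{\EE}}$ is injective, since any $v$ in its kernel is a genuine element of $\ker(\lambda - A_\epsilon)$ and hence vanishes, and an injective map between spaces of equal finite dimension is bijective. So a (unique) $u \in \dom(A_\epsilon)$ solving $(\lambda - A_\epsilon)u = g$ exists. Density of $\dom(A_\epsilon)$ is then obtained by approximating any $w \in C^2(\cG)$ (a dense class) by domain elements: near each endpoint one adds a correction supported in a $\delta$-neighbourhood that adjusts $u''$ (when $p_\ee, q_\ee > 0$) or $u'$ (when $p_\ee = 0$ or $q_\ee = 0$) so as to enforce the transmission condition, while leaving all endpoint values unchanged and having arbitrarily small sup-norm as $\delta \to 0$. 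With density, closedness, the positive maximum principle and surjectivity in hand, the Hille--Yosida--Ray theorem gives the first assertion.

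For conservativeness, recall that the Feller semigroup is conservative exactly when $T(t)\ind = \ind$ for all $t \ge 0$, which for a $C_0$-semigroup is equivalent to $\ind \in \dom(A_\epsilon)$ together with $A_\epsilon \ind = 0$. Since $\ind'' = \ind' = 0$, one has $A_\epsilon \ind = 0$ and $L\ind = 0$ automatically, whereas $\epsilon\Phi\ind = \epsilon\bigl( \sum_{\ff \in \EE} \llef - \lle, \sum_{\ff \in \EE} \rref - \rre \bigr)_{\ee}$. Hence $\ind \in \dom(A_\epsilon)$, and thus the semigroup is conservative, if and only if \eqref{eq:conservative} holds, which is the second assertion.
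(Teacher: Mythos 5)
Your proposal is correct, and while it follows the same overall skeleton as the paper (Hille--Yosida--Ray characterization: density, positive maximum principle, range condition, plus the observation that conservativeness reduces to $\ind \in \dom(A_\epsilon)$, i.e.\ to $\Phi\ind = 0$), your treatment of the range condition takes a genuinely different route. The paper uses Greiner's technique of perturbed boundary conditions: it introduces the maximal operator $\Lambda_\epsilon$, inverts $L$ on $\ker(\lambda - \Lambda_\epsilon)$ explicitly via the determinant computation of Lemma~\ref{lem:det}, derives the quantitative bound~\eqref{eq:Lnorm} on $\norm{L_{\lambda,\epsilon}}$, and then solves $(\lambda - A_\epsilon)u = v$ by inverting $I_{C(\cG)} - L_{\lambda,\epsilon}\Phi_\epsilon$ through a Neumann series applied to $\res(\lambda,A_0)v$. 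You instead argue by a finite-dimensional Fredholm alternative: injectivity of $\lambda - A_\epsilon$ comes for free from the positive maximum principle (which you, unlike the paper, prove in full, including the delicate sticky-degenerate case $p_{\ee} = 0$, where the boundary condition collapses to $u'(\ee^-) = 0$ and the one-sided Taylor expansion gives $u''(\ee^-) \le 0$), and this forces the boundary map $(L - \epsilon\Phi)|_V$ on the $2\abs{\EE}$-dimensional space $V$ of edgewise homogeneous solutions to be bijective, so surjectivity holds for every $\lambda > 0$ with no estimates whatsoever. Your route is more elementary and self-contained; it needs neither Lemma~\ref{lem:det} nor~\eqref{eq:Lnorm}, and your density argument via correctors vanishing at the endpoints is a clean alternative to Lemma~\ref{lem:dense-derivative}, exploiting that $\Phi$ sees only endpoint values. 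What Greiner's method buys in exchange is the explicit resolvent identity $\res(\lambda,A_\epsilon) = (I_{C(\cG)} - L_{\lambda,\epsilon}\Phi_\epsilon)^{-1}\res(\lambda,A_0)$ of Corollary~\ref{cor:aeps-res}, which is the engine of the entire singular-perturbation analysis in Section~\ref{sec:convergence} (Lemma~\ref{lem:res-conv} and Theorem~\ref{thm:convergence-in-C}); your dimension-counting argument proves existence of the resolvent but yields no formula for it, so it establishes the generation theorem while leaving the subsequent convergence results without their main tool.
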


Before we prove Theorem~\ref{thm:main-feller} we introduce the operator
\( A_0 \) as \( \epsilon A_\epsilon \) with \( \Phi = 0 \), that is
\( A_0 u = \sigma u'' \) for all \( u \in \dom(A_0) \), where
\[
\dom(A_0) \coloneqq \lrc{ u \in C^2(\cG)\colon L u = 0 }.
\]
In other words, on each edge \( \ee \), \( A_0 \) is a (rescaled by
\( \sigma_{\ee} \)) copy of the generator \( G_{\pee,\qee} \) of a
one-dimensional sticky diffusion on \( [0,1] \) -- see Appendix.

\begin{proof}
By a well-known characterisation of Feller semigroups, see Theorem~2.2
in~\cite{MR838085}, the operator \( A_{\epsilon} \) is a Feller generator if and
only if it is densely defined, satisfies the positive maximum principle, and the
range of \( \lambda - A_{\epsilon} \) is \( C(\cG) \) for some
\( \lambda > 0 \).
The fact that the domain \( \dom(A_{\epsilon}) \) is dense in \( C(\cG) \)
follows by Lemma~\ref{lem:dense-derivative}, and arguing as
in~\cite{bobrowski-convergence} p.~17, it is easy to check that \( A_\epsilon \)
satisfies the positive maximum principle.
Hence, we are left with proving the range condition.
To this end we use Greiner's idea of perturbing the boundary conditions,
see~\cite[Lemma~1.4]{greiner}.

Let \( \Lambda_\epsilon \) be the operator in \( C(\cG) \) defined as
\( A_\epsilon \) with domain \( C^2(\cG) \), that is
\( \Lambda_\epsilon u \coloneqq \epsilon^{-1} \sigma u'' \) for all
\( u \in C^2(\cG) \), and define
\[
L_{\lambda,\epsilon}\colon \R^{2 \abs{ \EE }} \to \ker(\lambda - \Lambda_\epsilon), \qquad L_{\lambda,\epsilon} \coloneqq (L_{|\ker(\lambda - \Lambda_\epsilon)})^{-1}.
\]
That is, \( L_{\lambda,\epsilon} \) is the inverse of \( L \) as restricted to
\( \ker(\lambda - \Lambda_\epsilon) \).
By Lemma~\ref{lem:det} in Appendix such inverse exists for all \( \lambda > 0 \) and
\[
\lrp[\big]{L_{\lambda,\epsilon}(a_{\ff},b_{\ff})_{\ff \in \EE}}(\ee(s)) =
u_{\ee}(\ee(s)) \coloneqq c_{\ee} \e^{\gamma_{\ee} s} + d_{\ee}
\e^{-\gamma_{\ee} s}
\]
for all \( (a_{\ff},b_{\ff})_{\ff \in \EE} \in \R^{2 \abs{ \EE }} \),
\( \ee \in \EE \), and \( s \in [0,1] \), where
\( \gamma_{\ee} \coloneqq \sqrt{\lambda \epsilon/\sigma_{\ee}} \), and
\( c_{\ee} \), \( d_{\ee} \) is the unique pair of real numbers
satisfying~\eqref{eq:system} for
\[
\mu \coloneqq \gamma_{\ee}, \qquad a(\mu) \coloneqq a_{\ee} \gamma_{\ee}^{-1},
\qquad b(\mu) \coloneqq b_{\ee} \gamma_{\ee}^{-1}.
\]
Moreover, by~\eqref{eq:det-estimate} there exist \( \lambda_0 > 0 \) and
\( M > 0 \) (that depend merely on \( \epsilon \) and \( \sigma \)) such that
\[
\norm{ u_{\ee}(\ee(\cdot)) }_{C[0,1]} \le \frac{M}{\sqrt{\lambda}} (\abs{
  a_{\ee} } + \abs{ b_{\ee} }), \qquad \eE,
\]
provided that \( \lambda > \lambda_0 \).
This proves that \( L_{\lambda,\epsilon} \) is a bounded operator
\( \R^{2 \abs{ \EE }} \to C(\cG) \) and its norm satisfies
\begin{equation}
\label{eq:Lnorm}
\norm{ L_{\lambda,\epsilon} } \le \frac{M'}{\sqrt{\lambda}}, \qquad \lambda > \lambda_0
\end{equation}
for some \( M' > 0 \) depending on the norm introduced in
\( \R^{2 \abs{ \EE }} \).
Therefore, since \( \Phi_{\epsilon} \) is bounded in \( C(\cG) \), there exists
\( \lambda_1 > 0 \) such that the norm of the bounded operator
\( L_{\lambda,\epsilon} \Phi_{\epsilon} \) in \( C(\cG) \) is less then \( 1 \)
for \( \lambda > \lambda_1 \), and consequently the operator
\( I_{C(\cG)} - L_{\lambda,\epsilon} \Phi_{\epsilon} \), where \( I_{C(\cG)} \)
is the identity operator in \( C(\cG) \), is invertible.

Let \( v \in C(\cG) \), and choose \( \lambda > \lambda_1 \).
Set \( w \coloneqq (\lambda - A_0)^{-1}v \), which exists by
Theorem~\ref{thm:feller-semigroup-interval}, and define
\[
u \coloneqq (I_{C(\cG)} - L_{\lambda,\epsilon} \Phi_{\epsilon})^{-1} w.
\]
We show that \( u \) belongs to \( \dom(A_\epsilon) \) and that
\( (\lambda - A_\epsilon)u = v \).
We have \( w \in \dom(A_0) \), and hence
\( u = w + L_{\lambda,\epsilon} \Phi_{\epsilon} u \) belongs to \( C^2(\cG) \).
To check that \( u \in D(A_\epsilon) \), note that
\[
Lu = Lw + L L_{\lambda,\epsilon} \Phi_{\epsilon} u = 0 + \Phi_{\epsilon} u =
\Phi_{\epsilon} u.
\]
Finally,
\begin{align*}
A_\epsilon u &= \Lambda_\epsilon w + \lambda L_{\lambda,\epsilon} \Phi_{\epsilon}
u - (\lambda - \Lambda_\epsilon)L_{\lambda,\epsilon} \Phi_{\epsilon} u\\
&= -(\lambda - \Lambda_\epsilon)w + \lambda w + \lambda L_{\lambda,\epsilon}
\Phi_{\epsilon} u = -v + \lambda u,
\end{align*}
which completes the proof of the generation part.

The semigroup generated by \( A_{\epsilon} \) is conservative if and only if the
function \( 1_{\cG} \) that equals \( 1 \) for every \( x \in \cG \) belongs to
the domain of \( A_{\epsilon} \).
Since \( L1_{\cG} = 0 \), \( 1_{\cG} \in \dom(A_{\epsilon}) \) is equivalent to
\( \Phi 1_{\cG} = 0 \), which is exactly~\eqref{eq:conservative}.
\end{proof}

As a~by-product of the proof we obtain the formula for the resolvent
\( \res(\lambda,A_{\epsilon}) \) of \( A_\epsilon \) in terms of the resolvent
\( \res(\lambda,A_0) \) of \( A_0 \).

\begin{corollary}
\label{cor:aeps-res}
For sufficiently large \( \lambda > 0 \) we have
\[
\res(\lambda,A_\epsilon)u = (I_{C(\cG)} - L_{\lambda,\epsilon} \Phi_{\epsilon})^{-1} \res(\lambda,A_0)u, \qquad u
\in C(\cG),
\]
where
\( L_{\lambda,\epsilon} \coloneqq (L_{|\ker(\lambda-\Lambda_\epsilon)})^{-1} \)
for
\[
\Lambda_\epsilon u \coloneqq \epsilon^{-1} \sigma u'', \qquad u \in C^2(\cG).
\]
\end{corollary}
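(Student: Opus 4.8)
The plan is to recognise that the asserted identity is nothing more than the resolvent that was constructed explicitly inside the proof of Theorem~\ref{thm:main-feller}; once $A_\epsilon$ is known to be a generator, it suffices to read the formula off that construction and to invoke injectivity of $\lambda - A_\epsilon$. No new analytic estimate is required, so the ``proof'' is essentially a matter of bookkeeping and of pinning down the admissible range of $\lambda$.

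First I would fix $\lambda$ large enough that two things hold simultaneously. On the one hand, $\norm{L_{\lambda,\epsilon}\Phi_\epsilon} < 1$, so that $I_{C(\cG)} - L_{\lambda,\epsilon}\Phi_\epsilon$ is boundedly invertible through its Neumann series, exactly as in the proof of Theorem~\ref{thm:main-feller}; this is where the estimate~\eqref{eq:Lnorm} and the choice of $\lambda_1$ enter. On the other hand, $\lambda$ lies in the resolvent set of $A_\epsilon$, which is nonempty for large $\lambda$ precisely because $A_\epsilon$ is a Feller generator by Theorem~\ref{thm:main-feller}. For such $\lambda$ and an arbitrary $v \in C(\cG)$, I would set $w \coloneqq \res(\lambda,A_0)v$, which exists by Theorem~\ref{thm:feller-semigroup-interval}, and
\[
u \coloneqq (I_{C(\cG)} - L_{\lambda,\epsilon}\Phi_\epsilon)^{-1} w .
\]
The computation carried out in the proof of Theorem~\ref{thm:main-feller} already shows that this $u$ belongs to $\dom(A_\epsilon)$ and satisfies $(\lambda - A_\epsilon)u = v$. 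Since $\lambda - A_\epsilon$ is then a bijection of $\dom(A_\epsilon)$ onto $C(\cG)$, this forces $u = \res(\lambda,A_\epsilon)v$, which is exactly the claimed formula once $v$ is renamed to $u$.

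There is no genuine obstacle here: the entire content sits inside the completed proof of Theorem~\ref{thm:main-feller}, and the corollary merely records the explicit representation $\res(\lambda,A_\epsilon) = (I_{C(\cG)} - L_{\lambda,\epsilon}\Phi_\epsilon)^{-1}\res(\lambda,A_0)$. The only point deserving a word of care is the admissible range of $\lambda$: it must be taken large enough both for the Neumann series defining $(I_{C(\cG)} - L_{\lambda,\epsilon}\Phi_\epsilon)^{-1}$ to converge and for $\res(\lambda,A_\epsilon)$ to be defined, and this is precisely what ``for sufficiently large $\lambda > 0$'' means in the statement.
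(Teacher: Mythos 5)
Your proposal is correct and matches the paper's approach exactly: the paper presents Corollary~\ref{cor:aeps-res} as a direct by-product of the construction in the proof of Theorem~\ref{thm:main-feller}, where $u = (I_{C(\cG)} - L_{\lambda,\epsilon}\Phi_\epsilon)^{-1}\res(\lambda,A_0)v$ is shown to lie in $\dom(A_\epsilon)$ and satisfy $(\lambda - A_\epsilon)u = v$. Your additional remark that injectivity of $\lambda - A_\epsilon$ (guaranteed since $A_\epsilon$ is a generator) pins down $u$ as the resolvent applied to $v$ is precisely the implicit final step.
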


\subsection{Convergence}
\label{sec:convergence}

Here we examine what happens with the semigroup generated by \( A_\epsilon \),
when \( \epsilon \) converges to \( 0^+ \).
To this end we recall (a special case of) the singular convergence theorem due to
T.~Kurtz (see~\cite[Corollary~7.7,~p.~40]{MR838085} or~\cite[Theorem~42.2 and
Theorem~7.1]{bobrowski-convergence}).

\begin{theorem}
\label{thm:kurtz-res}
Assume that the following conditions hold.
\begin{enumerate}
  \item\label{item:kurtz1} For every \( \epsilon \in [0,1] \) each operator
\( \cA_\epsilon \) is the generator of a strongly continuous semigroup in a
Banach space \( \X \), and there exists \( M > 0 \) such that
\( \norm{ \e^{t\cA_\epsilon} }_{\lin(\X)} \le M \) for all
\( \epsilon \in (0,1] \) and \( t \ge 0 \).
  \item\label{item:kurtz2} For some \textnormal{(}hence all\textnormal{)}
\( \lambda > 0 \) the resolvent \( R(\lambda,\epsilon \cA_\epsilon) \) converges
strongly to the resolvent \( R(\lambda,\cA_0) \) as \( \epsilon \to 0^+ \).
  \item\label{item:kurtz3} For every \( x \in \X \) the limit
\[
\cP x \coloneqq \lim_{t \to +\infty} \e^{t \cA_0} x
\]
exists.
  \item\label{item:kurtz4} An operator \( \cQ \) generates a strongly continuous semigroup in the space
\[
\X_0 \coloneqq \range \cP,
\]
and for some \textnormal{(}hence all sufficiently large\textnormal{)}
\( \lambda > 0 \) we have
\[
\lim_{\epsilon \to 0^+} R(\lambda,\cA_\epsilon) x = R(\lambda,\cQ) \cP x, \qquad
x \in \X.
\]
\end{enumerate}
Then
\[
\lim_{\epsilon \to 0^+} \e^{t \cA_\epsilon} x = \e^{t \cQ} \cP x
\]
for all \( t > 0 \) and \( x \in \X \).
The convergence is uniform in \( t \) on compact subsets of \( (0,+\infty) \),
and if \( x \in \X_0 \), then the formula holds also for \( t = 0 \), and the
convergence is uniform in \( t \) on compact subsets of \( [0,+\infty) \).
\end{theorem}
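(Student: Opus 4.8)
The plan is to recognise the limit object as the degenerate semigroup whose Laplace transform is the pseudoresolvent \( \lambda\mapsto R(\lambda,\cQ)\cP \), and to deduce the stated convergence from the Trotter--Kato approximation theorem. I would begin with the structural properties of \( \cP \). The semigroups \( \e^{t\cA_0} \) are uniformly bounded by \( M \), since by~\ref{item:kurtz2} they are Trotter--Kato limits of the semigroups \( t\mapsto\e^{t(\epsilon\cA_\epsilon)}=\e^{(t\epsilon)\cA_\epsilon} \), each bounded by \( M \) on account of~\ref{item:kurtz1}; hence the pointwise limit \( \cP \) from~\ref{item:kurtz3} is a bounded operator. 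Passing to the limit \( t\to+\infty \) in \( \e^{s\cA_0}\e^{t\cA_0}=\e^{(s+t)\cA_0} \) gives \( \e^{s\cA_0}\cP=\cP\e^{s\cA_0}=\cP \) and therefore \( \cP^2=\cP \), so that \( \cP \) is a bounded projection and \( \X=\X_0\oplus\ker\cP \) with \( \X_0=\range\cP \).

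Next I would set \( R(\lambda):=R(\lambda,\cQ)\cP \) for large \( \lambda \) and verify that it is a pseudoresolvent. Because \( \cQ \) acts in \( \X_0 \) and \( \cP \) restricts to the identity there, one has \( \cP R(\lambda,\cQ)\cP=R(\lambda,\cQ)\cP \), and a direct computation yields the resolvent identity \( R(\lambda)-R(\mu)=(\mu-\lambda)R(\lambda)R(\mu) \). The range of \( R(\lambda) \) is \( \dom(\cQ) \), which is dense in \( \X_0 \), so \( \overline{\range R(\lambda)}=\X_0 \), while \( \ker R(\lambda)=\ker\cP \) independently of \( \lambda \) because \( R(\lambda,\cQ) \) is injective on \( \X_0 \). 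Consequently \( R(\lambda) \) is the Laplace transform of the degenerate semigroup \( \sss(t):=\e^{t\cQ}\cP \), which is strongly continuous on \( (0,+\infty) \) and satisfies \( \sss(0^+)=\cP \).

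With these preparations in hand the heart of the argument is the Trotter--Kato theorem: the equiboundedness in~\ref{item:kurtz1} together with the strong convergence \( R(\lambda,\cA_\epsilon)x\to R(\lambda)x=R(\lambda,\cQ)\cP x \) from~\ref{item:kurtz4} yields \( \e^{t\cA_\epsilon}x\to\sss(t)x=\e^{t\cQ}\cP x \) strongly, uniformly for \( t \) in compact subsets of \( (0,+\infty) \). For \( x\in\X_0 \) we have \( \cP x=x \), so the limit extends continuously to \( t=0 \) and the convergence becomes uniform on compact subsets of \( [0,+\infty) \).

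The main obstacle is the degeneracy of the limit, reflected in the boundary layer at \( t=0 \). Since \( \overline{\range R(\lambda)}=\X_0\subsetneq\X \) in general, the nondegenerate form of Trotter--Kato does not apply verbatim, and one must establish separately that \( \e^{t\cA_\epsilon}x\to0 \) for every \( t>0 \) and every \( x\in\ker\cP \). This is exactly where~\ref{item:kurtz2} and~\ref{item:kurtz3} are used: writing \( \cA_\epsilon=\epsilon^{-1}(\epsilon\cA_\epsilon) \), the fast flow generated by \( \epsilon\cA_\epsilon\to\cA_0 \) runs on the time scale \( t/\epsilon\to+\infty \), and \( \e^{t\cA_0}\to\cP \) forces the \( \ker\cP \)-component to relax to zero as \( \epsilon\to0^+ \). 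Controlling this relaxation uniformly and splicing it to the slow evolution governed by \( \cQ \) on \( \X_0 \) is the technical core; the discontinuity \( \sss(0^+)=\cP\ne I \) is precisely the reason the convergence cannot hold up to \( t=0 \) unless \( x\in\X_0 \).
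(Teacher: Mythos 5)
First, a point of reference: the paper does not prove Theorem~\ref{thm:kurtz-res} at all; it is recalled as a known result of Kurtz, with the proof delegated to \cite[Corollary~7.7, p.~40]{MR838085} and \cite[Theorems~42.2 and~7.1]{bobrowski-convergence}. So your attempt must be judged on its own internal logic. Its preparatory part is sound and would indeed open any proof: \( \cP \) is a bounded projection commuting with \( \e^{t\cA_0} \) (the uniform bound \( \norm{\e^{t\cA_0}} \le M \) follows, as you say, from the nondegenerate Trotter--Kato theorem applied to \( \epsilon\cA_\epsilon \to \cA_0 \), legitimate because \( \cA_0 \) is assumed to be a generator); \( \lambda \mapsto R(\lambda,\cQ)\cP \) is a pseudoresolvent with kernel \( \ker\cP \) and range dense in \( \X_0 \); and \( t \mapsto \e^{t\cQ}\cP \) is the degenerate semigroup whose Laplace transform it is.

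The proof proper, however, is missing, and the one step you assert is false as stated. In your third paragraph you claim that conditions \ref{item:kurtz1} and \ref{item:kurtz4} alone give \( \e^{t\cA_\epsilon}x \to \e^{t\cQ}\cP x \) uniformly on compacts of \( (0,+\infty) \) ``by the Trotter--Kato theorem''. No such theorem exists for degenerate limits. Take \( \X = \C \) and \( \cA_\epsilon \coloneqq i\epsilon^{-1} \): every \( \e^{t\cA_\epsilon} = \e^{it/\epsilon} \) has modulus one, so \ref{item:kurtz1} holds; \( R(\lambda,\cA_\epsilon) = (\lambda - i/\epsilon)^{-1} \to 0 = R(\lambda,\cQ)\cP \) with \( \cP = 0 \), \( \X_0 = \set{0} \), so \ref{item:kurtz4} holds; even \ref{item:kurtz2} holds, since \( \epsilon\cA_\epsilon = i \) for all \( \epsilon \). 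Yet \( \e^{t\cA_\epsilon}x \) converges for no \( t>0 \) and \( x \neq 0 \). The only hypothesis violated is \ref{item:kurtz3} (\( \e^{it}x \) has no limit as \( t \to +\infty \)), which shows that \ref{item:kurtz2} and \ref{item:kurtz3} are not, as your last paragraph suggests, auxiliary devices for cleaning up the \( \ker\cP \)-component or the layer at \( t = 0 \): without them nothing converges for any \( t>0 \), so the relaxation mechanism must carry the main estimate. Your final paragraph names that estimate (``controlling this relaxation uniformly and splicing it to the slow evolution'') and stops there; carrying it out \emph{is} the theorem. Concretely, two arguments are needed and neither appears in your text: first, convergence for \( x \in \X_0 \), which the textbook Trotter--Kato theorem does not give verbatim (the limit pseudoresolvent has non-dense range and \( \cA_\epsilon \) need not leave \( \X_0 \) invariant), but which can be proved by comparing \( \e^{t\cA_\epsilon}x \) with \( \e^{t\cQ}x \) through the lifts \( s \mapsto R(\lambda,\cA_\epsilon)(\lambda-\cQ)\e^{s\cQ}x \); second, for general \( x \), the splicing \( \e^{t\cA_\epsilon}x = \e^{(t-\epsilon s_0)\cA_\epsilon}\,\e^{s_0(\epsilon\cA_\epsilon)}x \), where \( \e^{s_0(\epsilon\cA_\epsilon)}x \) is close to \( \e^{s_0\cA_0}x \) by \ref{item:kurtz2} and Trotter--Kato, and \( \e^{s_0\cA_0}x \) is close to \( \cP x \) for large \( s_0 \) by \ref{item:kurtz3}, after which the \( \X_0 \)-convergence applies to \( \cP x \); note that the naive use of Trotter--Kato on the fast scale fails exactly because \( t/\epsilon \to +\infty \) leaves every compact time interval. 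As it stands, your proposal is a correct reformulation of the statement together with an accurate diagnosis of the difficulty, not a proof of it.
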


We apply Kurtz's theorem in the following setup.
Let \( \X \coloneqq C(\cG) \) and \( \cA_\epsilon \coloneqq A_\epsilon \) for
every \( \epsilon \in [0,1] \).
Then condition~\ref{item:kurtz1} holds by Theorem~\ref{thm:main-feller}.
Moreover, by Theorem~\ref{thm:feller-semigroup-interval}, we obtain
\begin{equation}
\label{eq:Azerolim}
\lim_{t \to +\infty} \e^{tA_0} u = \lim_{\lambda \to 0^+} \lambda
\res(\lambda,A_0) u = P u, \qquad u \in C(\cG),
\end{equation}
for
\begin{equation}
\label{eq:P}
P \coloneqq (P_{p_{\ee},q_{\ee}})_{\eE},
\end{equation}
establishing~\ref{item:kurtz3}.
Therefore, we are left with choosing appropriate \( \cQ \) and
proving~\ref{item:kurtz2}, \ref{item:kurtz4}.

To this end for each \( \eE \) we define the operator
\( Q_{\e}\colon C(\cG) \to C[0,1] \) in the following way.
If \( p_{\ee} = 1 \) and \( q_{\ee} = 1 \), then
\[
Q_{\ee} u(x) \coloneqq (1-x) \Phi_{\ee}^- u + x \Phi_{\ee}^+ u, \qquad u \in
C(\cG),\ x \in [0,1].
\]
On the other, if \( p_{\ee} \neq 1 \) or \( q_{\ee} \neq 1 \), then
\[
Q_{\ee} u \coloneqq \frac 1{1-p_{\ee}q_{\ee}} (q_{\ee}^* \Phi_{\ee}^- u +
p_{\ee}^* \Phi_{\ee}^+ u), \qquad u \in C(\cG),
\]
where the right-hand side is identified with the constant function on \( \ee \).
Now we define the operator \( Q\colon C(\cG) \to C(\cG) \) by the formula
\begin{equation}
\label{eq:Q}
Q \coloneqq \sigma (Q_{\ee})_{\eE} = (\sigma_{\ee} Q_{\ee})_{\eE}.
\end{equation}
Finally, in order to characterize \( \X_0 = \range P \), note that
\( \range P_{p_{\ee},q_{\ee}} = \R \), provided that
\( p_{\ee} q_{\ee} \neq 1 \), and \( \range P_{p_{\ee},q_{\ee}} = \R^2 \),
provided that \( p_{\ee} q_{\ee} = 1 \).
Hence
\[
\range P \eqqcolon C_0(\cG),
\]
where \( C_0(\cG) \) is isometrically isomorphic to \( \R^{N+N_1} \), \( N_1 \)
being the number of \( \eE \) such that \( p_{\ee} = q_{\ee} = 1 \).
Note that the operator \( Q \) is bounded on \( C_0(\cG) \).
Now we are ready to state the main result.

\begin{theorem}
\label{thm:convergence-in-C}
Let \( A_\epsilon \), \( \epsilon \in (0,1] \) be defined by~\eqref{eq:Aeps} with domain~\eqref{eq:Adom}. For operators \( P \) and \( Q \) given by~\eqref{eq:P} and~\eqref{eq:Q}, respectively, it follows that
\[
\lim_{\epsilon \to 0^+} \e^{t A_\epsilon} u = \e^{tQ} Pu, \qquad t > 0,\ u \in
C(\cG)
\]
uniformly in \( t \) on compact subsets of \( (0,+\infty) \).
Moreover, if \( u \in C_0(\cG) \), then the formula holds also for \( t = 0 \),
and the convergence is uniform on compact subsets of \( [0,+\infty) \).
\end{theorem}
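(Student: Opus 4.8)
The plan is to verify the four hypotheses of Kurtz's singular convergence theorem (Theorem~\ref{thm:kurtz-res}) in the setup already fixed in the text, namely \( \X \coloneqq C(\cG) \), \( \cA_\epsilon \coloneqq A_\epsilon \) for \( \epsilon \in [0,1] \) (so that \( \cA_0 = A_0 \)), \( \cP \coloneqq P \) and \( \cQ \coloneqq Q \), and then to read off the conclusion. Conditions~\ref{item:kurtz1} and~\ref{item:kurtz3} are already in place: the uniform bound \( \norm{ \e^{t A_\epsilon} } \le 1 \) holds because each \( A_\epsilon \) generates a Feller (hence sub-Markovian, thus contraction) semigroup by Theorem~\ref{thm:main-feller}, and the existence of \( P u = \lim_{t \to +\infty} \e^{t A_0} u \) is~\eqref{eq:Azerolim}. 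The generation requirement in~\ref{item:kurtz4} is immediate because \( C_0(\cG) = \range P \) is finite-dimensional and \( Q \) is bounded on it, so \( Q \) generates a uniformly continuous semigroup there. Thus the real work lies in condition~\ref{item:kurtz2} and in the resolvent identity of~\ref{item:kurtz4}.

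For condition~\ref{item:kurtz2} I would rerun Greiner's perturbation argument from the proof of Theorem~\ref{thm:main-feller} (cf.\ Corollary~\ref{cor:aeps-res}), this time for the operator \( \epsilon A_\epsilon \), which acts as \( \sigma u'' \) on the domain \( \set{ u \in C^2(\cG) \colon L u = \epsilon \Phi u } \). The unperturbed operator is now \( \Lambda \), given by \( \Lambda u \coloneqq \sigma u'' \) for \( u \in C^2(\cG) \), and the crucial observation is that the inverse \( \tilde L_\lambda \coloneqq (L_{|\ker(\lambda - \Lambda)})^{-1} \) no longer depends on \( \epsilon \) (the relevant exponents become \( \sqrt{\lambda/\sigma_\ee} \)). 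One therefore obtains, for all sufficiently large \( \lambda \) uniformly in \( \epsilon \), the formula \( \res(\lambda, \epsilon A_\epsilon) = (I_{C(\cG)} - \epsilon\, \tilde L_\lambda \Phi)^{-1} \res(\lambda, A_0) \). Since \( \tilde L_\lambda \Phi \) is a fixed bounded operator and \( \epsilon \to 0^+ \), the first factor converges to the identity in operator norm, whence \( \res(\lambda, \epsilon A_\epsilon) \to \res(\lambda, A_0) \) in norm; by the resolvent identity this propagates to all \( \lambda > 0 \), giving~\ref{item:kurtz2}.

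The hard part is the resolvent limit in~\ref{item:kurtz4}, that is \( \lim_{\epsilon \to 0^+} \res(\lambda, A_\epsilon) u = \res(\lambda, Q) P u \). Writing \( z_\epsilon \coloneqq \res(\lambda, A_\epsilon) u \), the contraction bound gives \( \norm{ z_\epsilon } \le \lambda^{-1} \norm{ u } \) uniformly in \( \epsilon \), and from \( \lambda z_\epsilon - \epsilon^{-1} \sigma z_\epsilon'' = u \) we get \( \norm{ \sigma z_\epsilon'' } = \epsilon \norm{ \lambda z_\epsilon - u } = O(\epsilon) \). Hence \( \set{ z_\epsilon } \) is bounded in \( C^1(\cG) \) with \( z_\epsilon'' \to 0 \) uniformly, and Arzel\`a--Ascoli yields a subsequential limit \( z_0 \) that is affine on each edge. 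I would then pass to the limit in the transmission conditions \( L z_\epsilon = \epsilon \Phi z_\epsilon \): substituting \( z_\epsilon''(\ee^\pm) = \epsilon \sigma_\ee^{-1} (\lambda z_\epsilon(\ee^\pm) - u(\ee^\pm)) \) shows that on every edge with \( p_\ee \neq 1 \) (respectively \( q_\ee \neq 1 \)) the limiting slope at the corresponding endpoint vanishes, forcing \( z_0 \) to be constant there, whereas on edges with \( p_\ee = q_\ee = 1 \) it remains affine; consequently \( z_0 \in C_0(\cG) = \range P \). Finally, integrating \( \sigma_\ee z_\epsilon'' = \epsilon (\lambda z_\epsilon - u) \) over each edge, dividing by \( \epsilon \), and letting \( \epsilon \to 0^+ \) produces a flux balance in which the factor \( \tfrac{1}{1 - p_\ee q_\ee} \) together with the combination \( q_\ee^* \Phi_\ee^- + p_\ee^* \Phi_\ee^+ \) (respectively the endpointwise balance when \( p_\ee = q_\ee = 1 \)) appear exactly as in the definition~\eqref{eq:Q} of \( Q \); this identifies \( z_0 \) as the unique solution of \( (\lambda - Q) z_0 = P u \) in \( C_0(\cG) \), i.e.\ \( z_0 = \res(\lambda, Q) P u \). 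Uniqueness of the limit upgrades subsequential convergence to convergence of the whole family, establishing~\ref{item:kurtz4}.

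With all four hypotheses verified, Theorem~\ref{thm:kurtz-res} delivers \( \lim_{\epsilon \to 0^+} \e^{t A_\epsilon} u = \e^{t Q} P u \) for \( t > 0 \) uniformly on compact subsets of \( (0,+\infty) \), together with the statement at \( t = 0 \) for \( u \in C_0(\cG) = \X_0 \), which is precisely the assertion. I expect the only genuine difficulty to be the flux-balance computation recovering the exact form of \( Q \)---in particular the case split \( p_\ee q_\ee = 1 \) versus \( p_\ee q_\ee \neq 1 \) and the term-by-term passage to the limit in the scaled transmission conditions.
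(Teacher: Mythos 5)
Your proposal is correct, and it follows the paper's skeleton everywhere except at the crucial resolvent limit, where it takes a genuinely different route. Conditions~\ref{item:kurtz1}, \ref{item:kurtz3}, the generation part of~\ref{item:kurtz4}, and also condition~\ref{item:kurtz2} (your identity \( \res(\lambda,\epsilon A_\epsilon) = (I_{C(\cG)} - \epsilon \tilde L_\lambda \Phi)^{-1}\res(\lambda,A_0) \) is exactly the identity invoked at the end of the paper's Lemma~\ref{lem:res-conv}) are handled the same way in both arguments. The divergence is in proving \( \res(\lambda,A_\epsilon)u \to \res(\lambda,Q)Pu \): the paper stays inside its explicit machinery, using Corollary~\ref{cor:aeps-res} to reduce everything to the strong limit of \( L_{\lambda,\epsilon}\Phi_\epsilon \), which it computes edge by edge in closed form from the solution formulas of Lemma~\ref{lem:det}; the limit \( \lambda^{-1}Q \) (and with it the very shape of \( Q \)) falls out of the exponentials, with the same two cases you identify. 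You instead run a PDE-style compactness argument: contraction bound, \( \norm{z_\epsilon''} = O(\epsilon) \), Arzel\`a--Ascoli, passage to the limit in \( Lz_\epsilon = \epsilon\Phi z_\epsilon \), flux balance, and uniqueness of the limit. Your route is sound --- the flux balance does reproduce \( (\lambda - Q)z_0 = Pu \) on edges where both \( p_\ee \neq 1 \) and \( q_\ee \neq 1 \), thanks to the identity \( p_\ee q_\ee^* + p_\ee^* q_\ee + p_\ee^* q_\ee^* = 1 - p_\ee q_\ee \) --- and it buys independence from explicit solution formulas (it would survive, say, non-constant diffusion coefficients on each edge), at the price of a finer case analysis than you state: when exactly one of \( p_\ee, q_\ee \) equals \( 1 \), the \enquote{integrate and divide by \( \epsilon \)} recipe breaks down, because \( \epsilon^{-1}z_\epsilon' \) at the absorbing endpoint is not controlled by the transmission conditions. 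There one must instead use that endpoint's condition itself, which after substituting \( z_\epsilon'' = \epsilon\sigma_\ee^{-1}(\lambda z_\epsilon - u) \) becomes derivative-free and passes to the limit directly, consistently with \( P_{p_\ee,1}u = u(\ee^+) \) and \( \sigma_\ee Q_\ee = \sigma_\ee \Phi_\ee^+ \) in that case. Since you explicitly flag the case split and the term-by-term limits as the remaining work, I count this as a loose end rather than a gap; conversely, what the paper's computation buys is economy: no compactness, no separate uniqueness step, and \( Q \) emerges automatically from formulas already established for the generation theorem.
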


To verify conditions~\ref{item:kurtz2} and~\ref{item:kurtz4} and consequently
prove Theorem~\ref{thm:convergence-in-C}, we need the following result.

\begin{lemma}
\label{lem:res-conv}
For sufficiently large \( \lambda > 0 \) and all \( u \in C(\cG) \) we have
\[
\lim_{\epsilon \to 0^+} \res(\lambda,\epsilon A_\epsilon) u = \res(\lambda,A_0) u,
\]
and
\[
\lim_{\epsilon \to 0^+} \res(\lambda,A_\epsilon) u = \res(\lambda,Q) Pu.
\]
\end{lemma}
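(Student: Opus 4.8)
The plan is to read the Greiner-type resolvent representation already used in the proof of Theorem~\ref{thm:main-feller} at two different spectral parameters. Write $\Lambda_0 u \coloneqq \sigma u''$ for the maximal operator on $C^2(\cG)$. Both $A_0$ and $\epsilon A_\epsilon$ are restrictions of $\Lambda_0$ (they share the action $\sigma u''$) and differ only through the boundary perturbation: $A_0$ imposes $Lu = 0$, while $\epsilon A_\epsilon$ imposes $Lu = \Phi_\epsilon u$. Greiner's formula therefore gives, for sufficiently large $\lambda$,
\[
\res(\lambda,\epsilon A_\epsilon) = (I - N_\lambda \Phi_\epsilon)^{-1}\res(\lambda,A_0),
\]
where $N_\lambda \coloneqq (L_{|\ker(\lambda-\Lambda_0)})^{-1}$ is bounded and independent of $\epsilon$. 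For $A_\epsilon$ itself I would invoke Corollary~\ref{cor:aeps-res} together with the scaling identity $\res(\lambda,A_\epsilon) = \epsilon\,\res(\epsilon\lambda,\epsilon A_\epsilon)$; since $\ker(\lambda-\Lambda_\epsilon) = \ker(\epsilon\lambda-\Lambda_0)$ (both are spanned by $\e^{\pm\gamma_\ee s}$ with $\gamma_\ee = \sqrt{\lambda\epsilon/\sigma_\ee}$), the operator $L_{\lambda,\epsilon}$ coincides with $N_{\epsilon\lambda}$ and the two representations are consistent.

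For the first limit this is immediate: $N_\lambda$ is a fixed bounded operator, so $N_\lambda\Phi_\epsilon = \epsilon\,N_\lambda\Phi \to 0$ in $\lin(C(\cG))$ as $\epsilon\to 0^+$, whence $(I - N_\lambda\Phi_\epsilon)^{-1}\to I$ in operator norm by a Neumann series and $\res(\lambda,\epsilon A_\epsilon)u \to \res(\lambda,A_0)u$ for every $u$.

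For the second limit I would first record that the free factor converges to a multiple of $P$. Evaluating the first display at the spectral parameter $\epsilon\lambda$ and using the scaling identity yields $\res(\lambda,A_\epsilon)u = \epsilon\,(I - L_{\lambda,\epsilon}\Phi_\epsilon)^{-1}\res(\epsilon\lambda,A_0)u$, and by~\eqref{eq:Azerolim} we have $\epsilon\,\res(\epsilon\lambda,A_0)u = \lambda^{-1}(\epsilon\lambda)\res(\epsilon\lambda,A_0)u \to \lambda^{-1}Pu$. The decisive point is the behaviour of $L_{\lambda,\epsilon}\Phi_\epsilon = \epsilon\,L_{\lambda,\epsilon}\Phi$ as $\gamma_\ee\to 0$. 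Using $u_\ee(s) = c_\ee\e^{\gamma_\ee s} + d_\ee\e^{-\gamma_\ee s}$ and solving the system~\eqref{eq:system} for the data $(a_\ee,b_\ee) = \epsilon(\Phi_\ee^- u,\Phi_\ee^+ u)$, I would expand to leading order. Because $\epsilon = \gamma_\ee^2\sigma_\ee/\lambda$, the two boundary equations divided by $\gamma_\ee^2$ become nonsingular in the limit, and the case distinction of~\eqref{eq:Q} emerges: when $p_\ee = q_\ee = 1$ the profile tends to the affine function $\lambda^{-1}\sigma_\ee\lrb{(1-s)\Phi_\ee^- u + s\Phi_\ee^+ u}$, while when $p_\ee\neq 1$ or $q_\ee\neq 1$ the slope is forced to vanish and the profile tends to the constant $\lambda^{-1}\sigma_\ee(1-p_\ee q_\ee)^{-1}\lrb{(1-q_\ee)\Phi_\ee^- u + (1-p_\ee)\Phi_\ee^+ u}$. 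In either case the limit is exactly $\lambda^{-1}(Qu)_\ee$, and since $L_{\lambda,\epsilon}$ is defined on the finite-dimensional space $\R^{2\abs{\EE}}$, the edgewise uniform convergence of the profiles upgrades to $L_{\lambda,\epsilon}\Phi_\epsilon \to \lambda^{-1}Q$ in operator norm.

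Combining the two ingredients, for large $\lambda$ both $\norm{L_{\lambda,\epsilon}\Phi_\epsilon} < 1$ and $\norm{\lambda^{-1}Q} < 1$, so $(I - L_{\lambda,\epsilon}\Phi_\epsilon)^{-1}\to(I - \lambda^{-1}Q)^{-1}$ in operator norm; together with the strong convergence $\epsilon\,\res(\epsilon\lambda,A_0)u\to\lambda^{-1}Pu$ this gives
\[
\res(\lambda,A_\epsilon)u \to (I - \lambda^{-1}Q)^{-1}\lambda^{-1}Pu = (\lambda - Q)^{-1}Pu = \res(\lambda,Q)Pu,
\]
where I use that $\range Q \subseteq C_0(\cG)$ and that $C_0(\cG)$ is $Q$-invariant, so the inverse agrees with the resolvent of $Q$ on $\range P$. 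The main obstacle is precisely the boundary-layer expansion of $L_{\lambda,\epsilon}$: one must track the competing orders of the $u'$- and $u''$-contributions in $L$, show that the correct scaling is $c_\ee - d_\ee = O(\gamma_\ee)$ in the non-sticky case and $c_\ee - d_\ee = O(\gamma_\ee^{-1})$ in the sticky case, and verify that the resulting limits reproduce precisely the two branches of~\eqref{eq:Q}. Once this asymptotic identity is established, the remaining steps are soft.
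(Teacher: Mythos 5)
Your proposal is correct and follows essentially the same route as the paper's own proof: the same Greiner-type identity of Corollary~\ref{cor:aeps-res} (with the \( \epsilon \)-scaling of spectral parameters, which the paper leaves implicit, spelled out explicitly), the same edgewise two-case computation showing \( L_{\lambda,\epsilon}\Phi_\epsilon \to \lambda^{-1}Q \), and the same Neumann-series combination with \( \epsilon\res(\epsilon\lambda,A_0)u \to \lambda^{-1}Pu \) giving \( (\lambda-Q)^{-1}Pu \). The limits you claim in the sketched boundary-layer expansion (constant profile \( \lambda^{-1}\sigma_{\ee}(1-p_{\ee}q_{\ee})^{-1}(q_{\ee}^*\Phi_{\ee}^-u + p_{\ee}^*\Phi_{\ee}^+u) \) when \( p_{\ee}q_{\ee}\neq 1 \), affine profile when \( p_{\ee}=q_{\ee}=1 \)) coincide exactly with the paper's Cases 1 and 2, which the paper verifies by solving the system via Lemma~\ref{lem:det}.
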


\begin{proof}
We first prove the second equality.
Combining Theorem~\ref{thm:feller-semigroup-interval} and
Corollary~\ref{cor:aeps-res}, we are left with investigating the (strong) limit
of \( L_{\lambda,\epsilon} \Phi_\epsilon \).
Let \( u = (u_{\ee})_{\eE} \in C(\cG) \).
Then, calculating as in the proof of Theorem~\ref{thm:main-feller}, we have
\[
L_{\lambda,\epsilon} \Phi_\epsilon u(x) = (v_\ee)_{\eE},
\]
where
\[
v_{\ee}(x) \coloneqq c_{\ee} \e^{\gamma_{\ee} x} + d_{\ee} \e^{-\gamma_{\ee} x},
\qquad x \in [0,1]
\]
for \( \gamma_{\ee} \coloneqq \sqrt{\lambda \epsilon/\sigma_{\ee}} \), and
\( c_{\ee} \), \( d_{\ee} \) being the unique pair of real numbers satisfying
the system of linear equations~\eqref{eq:system} for
\( z \coloneqq \gamma_{\ee} \), and
\( a \coloneqq \epsilon \gamma_{\ee}^{-1} \Phi_{\ee}^- u \),
\( b \coloneqq \epsilon \gamma_{\ee}^{-1} \Phi_{\ee}^+ u \).
Now, for each \( \eE \) we consider two cases, depending on whether
\( p_{\ee} = q_{\ee} = 1 \) or not.

\emph{Case} 1: Assume that \( p_{\ee} \neq 1 \) or \( q_{\ee} \neq 1 \).
Then, solving the system explicitly using Lemma~\ref{lem:det} and taking the
limit as \( \epsilon \to 0^+ \), we obtain that both \( c_{\ee} \) and
\( d_{\ee} \) converges to \( \delta_{\ee}/2 \), where
\[
\delta_{\ee} \coloneqq \lambda^{-1}\sigma_{\ee} \frac{q_{\ee}^* \Phi_{\ee}^- u + p_{\ee}^* \Phi_{\ee}^+ u}{1-p_{\ee}q_{\ee}}.
\]
Consequently \( v_{\ee} \) converges in \( C[0,1] \) to the constant function
that equals \( \delta_{\ee} \) on \( [0,1] \).

\emph{Case} 2: Assume that \( p_{\ee} = q_{\ee} = 1 \). 
Then, using Lemma~\ref{lem:det}, we have
\[
v_{\ee}(x) = \lambda^{-1}\sigma_{\ee} \Phi_{\ee}^- u \frac{\e^{\gamma_{\ee}
    (1-x)} - \e^{-\gamma_{\ee}(1-x)}}{\e^{\gamma_{\ee}} - \e^{-\gamma_{\ee}}} +
\lambda^{-1}\sigma_{\ee} \Phi_{\ee}^+ u \frac{\e^{\gamma_{\ee} x} -
  \e^{-\gamma_{\ee} x}}{\e^{\gamma_{\ee}} - \e^{-\gamma_{\ee}}}, \qquad x \in
[0,1],
\]
and taking \( \epsilon \to 0^+ \), we see that \( v_{\ee} \) converges in
\( C[0,1] \) to the function
\[
x \mapsto (1-x) \lambda^{-1}\sigma_{\ee} \Phi_{\ee}^- u + x
\lambda^{-1}\sigma_{\ee} \Phi_{\ee}^+ u.
\]

Combining Case 1 and Case 2 we see that \( L_{\lambda,\epsilon} \Phi_\epsilon \)
converges in \( C(\cG) \) to \( \lambda^{-1} Q \) as \( \epsilon \to 0^+ \).
Hence (recall that the norm of \( L_{\lambda,\epsilon} \) is uniformly bounded
in \( \epsilon \), see~\eqref{eq:Lnorm}), by~\eqref{eq:Azerolim} we obtain
\begin{align*}
\lim_{\epsilon \to 0^+} \res(\lambda,A_\epsilon) u &= \lambda^{-1} \lim_{\epsilon \to 0^+} (I_{C(\cG)} - L_{\lambda,\epsilon} \Phi_\epsilon)^{-1} P u\\
&= \lambda^{-1} (I_{C(\cG)} - \lambda^{-1} Q)^{-1} P u\\
&= (\lambda - Q)^{-1} P u
\end{align*}
for all \( u \in C(\cG) \), which completes the proof of the second part of the
lemma.

The first part follows similarly from the identity (obtained in the same way as
Corollary~\ref{cor:aeps-res})
\[
\res(\lambda,\epsilon A_\epsilon) = \res(1,L_{\lambda,1} \Phi_\epsilon) \res(\lambda,A_0),
\]
which holds for sufficiently large \( \lambda > 0 \).
\end{proof}

\section{Appendix: One-dimensional sticky diffusion}
\label{sec:one-dimens-sticky}

Fix \( p,q \in [0,1] \) and consider the linear operator \( G_{p,q} \) in
\( C[0,1] \) given by
\[
G_{p,q} f \coloneqq f'', \qquad f \in D(G_{p,q})
\]
with domain \( D(G_{p,q}) \) consisting of functions \( f \in C^2[0,1] \)
satisfying boundary conditions
\begin{equation}
\label{eq:boundary-segment}
pf''(0) - p^* f'(0) = qf''(1) + q^* f'(1) = 0,
\end{equation}
where
\[
p^* \coloneqq 1-p, \qquad q^* \coloneqq 1-q.
\]
We prove that \( G_{p,q} \) is the generator of a Feller semigroup in
\( C[0,1] \).
The stochastic process related to \( G_{p,q} \), in the sense of Theorem~3.15
in~\cite{liggett2010}, may be described as follows.
Inside \( (0,1) \) the process behaves like a Brownian motion with variance
\( 1 \).
When a Brownian particle hits \( 0 \), then the behaviour depends on \( p \):
\begin{itemize}
  \item If \( p = 0 \), then the barrier at \( 0 \) is reflecting, and the
particle bounces back to \( (0,1) \).
The time that this particle spends at \( 0 \) is of measure zero with respect to
the Lebesgue measure, however, it has a positive measure with respect to the
L\'{e}vy local time \( t^+ \), see~\cite[Section~4]{bobrowski-morawska} and the
references given there.
  \item If \( p = 1 \), then the barrier at \( 0 \) is absorbing, and the
particle stays at \( 0 \) forever.
  \item If \( p \in (0,1) \), then the barrier at \( 0 \) is sticky, and the
particle stays at \( 0 \) for some time depending on \( p \).
In contradistinction to the case \( p = 0 \), the time has positive Lebesgue
measure and increases with \( p \), see the first displayed formula after (3.42)
in~\cite[p.~128]{liggett2010}.
\end{itemize}
We call \( p \) a stickiness coefficient at \( 0 \).
Similar description is valid for the barrier at \( 1 \) and the stickiness
coefficient \( q \).

It is also possible to describe the behaviour of the process governed by
\( G_{p,q} \) as the time increases.
To this end we introduce \( P_{p,q} \) as the bounded linear operator in
\( C[0,1] \) defined for \( p \neq 1 \) and \( q \neq 1 \) by
\[
P_{p,q} f \coloneqq \frac{pq^*f(0) + p^*qf(1) + p^*q^* \int_0^1
  f}{pq^*+p^*q+p^*q^*}, \qquad f \in C[0,1];
\]
here the right-hand side is identified with the constant function.
For \( p = q = 1 \) we additionally set
\[
P_{1,1}f(x) \coloneqq (1-x)f(0) + xf(1), \qquad f \in C[0,1],\ x \in [0,1].
\]
Note that for all \( p,q \in [0,1] \) the operator \( P_{p,q} \) is a
projection, that is \( P_{p,q}^2 = P_{p,q} \).

The main result concerning one-dimensional diffusion with sticky barriers is as
follows.

\begin{theorem}
\label{thm:feller-semigroup-interval}
The operator \( G_{p,q} \) generates a conservative, bounded analytic Feller
semigroup in \( C[0,1] \) of angle \( \pi/2 \), and 
\begin{equation}
\label{eq:feller-limit}
\lim_{t \to +\infty} \e^{tG_{p,q}} f = P_{p,q} f, \qquad f \in C[0,1].
\end{equation}
\end{theorem}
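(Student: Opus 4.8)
The plan is to obtain the generation statement from the Hille--Yosida--Ray characterisation of Feller generators (Theorem~2.2 in~\cite{MR838085}), exactly as for \( A_\epsilon \) in Theorem~\ref{thm:main-feller}: one checks that \( D(G_{p,q}) \) is dense in \( C[0,1] \), that \( G_{p,q} \) satisfies the positive maximum principle, and that \( \range(\lambda - G_{p,q}) = C[0,1] \) for some \( \lambda > 0 \). Density is the easiest point: every \( f \in C^2[0,1] \) that is constant on neighbourhoods of \( 0 \) and \( 1 \) satisfies \( f'(0) = f''(0) = f'(1) = f''(1) = 0 \), hence lies in \( D(G_{p,q}) \) by~\eqref{eq:boundary-segment}, and such functions are uniformly dense in \( C[0,1] \).

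For the positive maximum principle, suppose \( f \in D(G_{p,q}) \) attains a nonnegative maximum at \( x_0 \). If \( x_0 \in (0,1) \), then \( G_{p,q}f(x_0) = f''(x_0) \le 0 \) as usual. If \( x_0 = 0 \), I would split according to the value of \( p \): when \( p \in (0,1] \) the boundary condition~\eqref{eq:boundary-segment} gives \( f''(0) = (p^*/p)f'(0) \) (read as \( f''(0) = 0 \) when \( p = 1 \)), and since \( 0 \) is a left-endpoint maximum we have \( f'(0) \le 0 \), whence \( f''(0) \le 0 \); when \( p = 0 \) the condition reads \( f'(0) = 0 \), and a second-order Taylor expansion forces \( f''(0) \le 0 \) because \( f(x) \le f(0) \) for small \( x > 0 \). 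The endpoint \( x_0 = 1 \) is symmetric. Conservativity is then immediate: the constant function \( 1 \) lies in \( D(G_{p,q}) \) since its first and second derivatives vanish and thus satisfy~\eqref{eq:boundary-segment}, and \( G_{p,q}1 = 0 \).

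For the range condition I would solve \( (\lambda - G_{p,q})f = g \) explicitly. Writing the general solution of \( \lambda f - f'' = g \) on \( [0,1] \) as a variation-of-parameters particular solution plus \( c\,\e^{\mu x} + d\,\e^{-\mu x} \) with \( \mu = \sqrt\lambda \), the two boundary conditions~\eqref{eq:boundary-segment} become a \( 2 \times 2 \) linear system for \( (c,d) \) of the form~\eqref{eq:system}, which by Lemma~\ref{lem:det} has nonzero determinant for every \( \lambda > 0 \); this yields both solvability (the range condition) and an explicit resolvent. To upgrade to a bounded analytic semigroup of angle \( \pi/2 \), I would run the same computation for complex \( \lambda \in \CR \), for which \( \mu = \sqrt\lambda \) still has positive real part, and use the determinant estimate~\eqref{eq:det-estimate} to show that \( \CR \) lies in the resolvent set and that \( \norm{\lambda\,\res(\lambda,G_{p,q})} \) is bounded on each subsector \( \set{\abs{\arg\lambda} \le \pi - \eta} \), \( \eta > 0 \). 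A resolvent sector of half-angle \( \pi \) around the positive axis is exactly what gives analyticity of the maximal angle \( \pi/2 \).

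It remains to identify the long-time limit~\eqref{eq:feller-limit}. Since the semigroup is bounded and analytic and \( G_{p,q} \) has compact resolvent (being a regular Sturm--Liouville-type operator on a bounded interval), \( 0 \) is a semisimple, isolated spectral value and the remaining spectrum lies in an open left half-plane; hence \( \e^{tG_{p,q}}f \) converges, as \( t \to +\infty \), to the spectral projection onto \( \ker G_{p,q} \), which equals the Abel limit \( \lim_{\lambda \to 0^+}\lambda\,\res(\lambda,G_{p,q})f \). I would then read this limit off the explicit resolvent by letting \( \lambda \to 0^+ \), distinguishing the case \( p = q = 1 \) (where \( \ker G_{p,q} \) is the two-dimensional space of affine functions and the limit is \( (1-x)f(0) + xf(1) \)) from the case \( p \neq 1 \) or \( q \neq 1 \) (where \( \ker G_{p,q} \) consists of constants and the hyperbolic quotients collapse to the weighted average defining \( P_{p,q} \)). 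The main obstacle I anticipate is precisely this last step: controlling the resolvent uniformly as \( \lambda \to 0^+ \) while the exponentials \( \e^{\pm\mu} \) degenerate to \( 1 \), and matching the resulting constants with the two-case definition of \( P_{p,q} \); the determinant computation underlying Lemma~\ref{lem:det} and~\eqref{eq:det-estimate} should make both the \( \lambda \to 0^+ \) limit and the sectorial estimates tractable.
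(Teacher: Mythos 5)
Your proposal is correct, and its generation half follows essentially the paper's own path: the paper likewise builds the resolvent explicitly as \( c_\lambda \e_\lambda + d_\lambda \e_{-\lambda} + h_\lambda \) with \( h_\lambda(x) = \frac{1}{2\lambda}\int_0^1 \e^{-\lambda\abs{x-y}}f(y)\dd y \) (Lemma~\ref{lem:res-exists}), invokes Lemma~\ref{lem:det} for unique solvability of the boundary system, obtains the angle-\( \pi/2 \) sectorial estimate at infinity from~\eqref{eq:det-estimate} and near \( \lambda = 0 \) from the Abel-limit computation (Propositions~\ref{prop:sectorial-interval} and~\ref{prop:limit-at-0}), and checks the positive maximum principle in the same way --- in fact your treatment of the corner case \( p = 0 \) (second-order Taylor expansion at a boundary maximum) is spelled out more carefully than the paper's one-line remark. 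You genuinely diverge in two places. First, density: the paper uses the quantitative construction of Lemma~\ref{lem:dense-derivative} (functions with prescribed \( f' \), \( f'' \) at the endpoints and arbitrarily small sup-norm), which it needs anyway for the graph operator \( A_\epsilon \); your observation that \( C^2 \) functions constant near \( 0 \) and \( 1 \) already lie in \( D(G_{p,q}) \) and are uniformly dense is simpler and perfectly adequate here, though it would not suffice for \( A_\epsilon \), whose conditions couple distinct edges. Second, and more substantially, the limit~\eqref{eq:feller-limit}: the paper avoids spectral decompositions --- it cites a Tauberian theorem for bounded analytic semigroups \cite[Corollary~32.1]{bobrowski-convergence} to convert the Abel limit of Proposition~\ref{prop:limit-at-0} into existence of the strong limit, then splits \( f = (f - P_{p,q}f) + P_{p,q}f \), kills the first piece using the Abel limit and the Laplace-transform identity, and fixes the second via \( G_{p,q}P_{p,q}f = 0 \) and the Yosida approximation. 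You instead argue through compact resolvent, isolation and semisimplicity of the eigenvalue \( 0 \), and the spectrum-determined growth bound for analytic semigroups; this works and even buys exponential convergence rather than mere convergence, but note one loose thread: semisimplicity of \( 0 \) is not automatic from the operator being of ``Sturm--Liouville type'' (the boundary conditions are not self-adjoint in general). The clean justification is that the finite Abel limit you compute in your final step forces the pole of \( \res(\cdot,G_{p,q}) \) at \( 0 \) to be simple, whence its residue is the spectral projection and equals \( P_{p,q} \). Routed that way your argument is complete, and the computation it rests on is exactly the paper's Proposition~\ref{prop:limit-at-0}, two cases and all.
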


Before we prove Theorem~\ref{thm:feller-semigroup-interval} we first state some
auxiliary results.
By \( \C_+ \) we denote the right-half of the complex plane, that is
\[
\C_+ \coloneqq \set{ \lambda \in \C\colon \re \lambda > 0 }.
\]

\begin{lemma}
\label{lem:det}
For every \( \mu \in \C_+ \), \( p,q \in [0,1] \) and functions
\( a,b\colon \C_+ \to \C \), the linear system
\begin{equation}
\label{eq:system}
\begin{bmatrix} p\mu - p^* & p\mu + p^*\\ \e^{\mu} \lrp{ q \mu +
  q^* } & \e^{-\mu} \lrp{ q \mu - q^* }.
\end{bmatrix}
\begin{bmatrix}
c_{\mu} \\ d_{\mu}
\end{bmatrix} =
\begin{bmatrix}
a(\mu) \\ b(\mu)
\end{bmatrix}
\end{equation}
has a unique complex solution \( (c_\mu,d_{\mu}) \) given by
\begin{equation}
\label{eq:clambda}
c_\mu = \frac{(p\mu+p^*)b(\mu) - \e^{-\mu}(q\mu-q^*)a(\mu)}{\e^{\mu}(p\mu+p^*)(q\mu+q^*)-\e^{-\mu}(p\mu-p^*)(q\mu-q^*)}
\end{equation}
and
\begin{equation}
\label{eq:dlambda}
d_{\mu} = \frac{\e^{\mu}(q\mu+q^*)a(\mu) - (p\mu-p^*)b(\mu)}{\e^{\mu}(p\mu+p^*)(q\mu+q^*)-\e^{-\mu}(p\mu-p^*)(q\mu-q^*)}
\end{equation}
Moreover,
\begin{equation}
\label{eq:det-estimate}
c_{\mu} = O(\abs{ a(\mu) }\e^{-2 \re\mu} + \abs{ b(\mu) }\e^{-\re \mu}),
\qquad d_{\mu} = O(\abs{ a(\mu) } + \abs{ b(\mu) }\e^{-\re \mu})
\end{equation}
as \( \re \mu \to +\infty \).
\end{lemma}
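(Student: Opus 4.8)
The plan is to treat~\eqref{eq:system} as a $2 \times 2$ linear system and apply Cramer's rule, so that the only substantive point is the nonvanishing of the coefficient determinant for $\re\mu > 0$. Writing $M$ for the coefficient matrix, a direct computation gives $\det M = -D(\mu)$, where
\[
D(\mu) \coloneqq \e^{\mu}(p\mu+p^*)(q\mu+q^*) - \e^{-\mu}(p\mu-p^*)(q\mu-q^*)
\]
is precisely the denominator appearing in~\eqref{eq:clambda}--\eqref{eq:dlambda}. Once $D(\mu) \neq 0$ is established, formulas~\eqref{eq:clambda} and~\eqref{eq:dlambda} follow at once from Cramer's rule and uniqueness is automatic; the signs are consistent because both the Cramer numerators and $\det M$ pick up a factor $-1$ relative to the stated expressions.

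To prove $D(\mu) \neq 0$, I would introduce $A(\mu) \coloneqq (p\mu+p^*)(q\mu+q^*)$ and $B(\mu) \coloneqq (p\mu-p^*)(q\mu-q^*)$, so that $D = \e^{\mu}A - \e^{-\mu}B$. The elementary identity $\abs{p\mu+p^*}^2 - \abs{p\mu-p^*}^2 = 4pp^*\re\mu \geq 0$ (valid since $p,p^* \geq 0$), and its analogue in $q$, yield $\abs{A(\mu)} \geq \abs{B(\mu)}$. Moreover $A(\mu) \neq 0$, since $\re(p\mu+p^*) = p\re\mu + p^* > 0$ when $p > 0$ while $p\mu+p^* = 1$ when $p = 0$, and likewise for the $q$-factor. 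Hence, for $\re\mu > 0$,
\[
\abs{D(\mu)} \geq \e^{\re\mu}\abs{A(\mu)} - \e^{-\re\mu}\abs{B(\mu)} \geq (\e^{\re\mu} - \e^{-\re\mu})\abs{A(\mu)} > 0,
\]
which settles existence, uniqueness, and the explicit formulas.

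For the asymptotics~\eqref{eq:det-estimate}, the same chain gives, for $\re\mu \geq 1$, the lower bound $\abs{D(\mu)} \geq (1-\e^{-2})\e^{\re\mu}\abs{p\mu+p^*}\,\abs{q\mu+q^*}$, while the factorwise inequalities $\abs{p\mu \pm p^*} \geq 1$, $\abs{q\mu \pm q^*} \geq 1$ hold on the same range together with $\abs{p\mu-p^*} \leq \abs{p\mu+p^*}$ and $\abs{q\mu-q^*} \leq \abs{q\mu+q^*}$. I would then insert these into the numerators of~\eqref{eq:clambda}--\eqref{eq:dlambda}: the numerator of $c_\mu$ is bounded by $\abs{p\mu+p^*}\abs{b(\mu)} + \e^{-\re\mu}\abs{q\mu-q^*}\abs{a(\mu)}$, and dividing by $\abs{D}$ and cancelling the common factors $\abs{p\mu+p^*}$ and $\abs{q\mu+q^*}$ produces $c_\mu = O(\abs{a(\mu)}\e^{-2\re\mu} + \abs{b(\mu)}\e^{-\re\mu})$; the estimate for $d_\mu$ follows identically.

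The main obstacle is the second step: showing the determinant does not vanish anywhere on $\C_+$ and doing so in a quantitative form strong enough to supply the lower bound on $\abs{D}$. The comparison $\abs{A} \geq \abs{B}$ and the strict positivity $A(\mu) \neq 0$ are the crux; once they are in hand, Cramer's rule and the final $O$-estimates are routine bookkeeping.
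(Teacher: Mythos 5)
Your proposal is correct and follows essentially the same route as the paper: the paper factors the determinant as \( \e^{-\mu}(p\mu+p^*)(q\mu+q^*)\lrb[\big]{H_p(\mu)H_q(\mu)-\e^{2\mu}} \) with \( H_r(z) = (rz-1+r)/(rz+1-r) \) and uses \( \abs{H_r} \le 1 \) on \( \C_+ \) (a M\"{o}bius-map observation equivalent to your identity \( \abs{p\mu+p^*}^2 - \abs{p\mu-p^*}^2 = 4pp^*\re\mu \ge 0 \)), arriving at the identical lower bound \( \abs{D(\mu)} \ge (\e^{\re\mu}-\e^{-\re\mu})\abs{p\mu+p^*}\abs{q\mu+q^*} \), after which Cramer's rule yields the formulas and estimates exactly as you describe. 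One slip to correct in your write-up: the claim that \( \abs{p\mu - p^*} \ge 1 \) and \( \abs{q\mu - q^*} \ge 1 \) for \( \re\mu \ge 1 \) is false (take \( p = 1/2 \), \( \mu = 1 \), so \( p\mu - p^* = 0 \)), but your argument never actually uses it -- the bounds only require \( \abs{p\mu+p^*} \ge 1 \), \( \abs{q\mu+q^*} \ge 1 \) together with the comparisons \( \abs{p\mu-p^*} \le \abs{p\mu+p^*} \), \( \abs{q\mu-q^*} \le \abs{q\mu+q^*} \), so the proof stands.
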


\begin{proof}
A~little bit of algebra shows that the determinant \( D_\mu \) of the system
equals
\begin{equation}
\label{eq:det-cd}
\e^{-\mu}(p\mu+p^*)(q\mu+q^*)
\lrb[\big]{H_p(\mu)H_q(\mu) - \e^{2\mu}},
\end{equation}
where
\[
H_r(z) \coloneqq \frac{rz - 1 + r}{rz + 1 - r}, \qquad r \in [0,1],\ z \in \C_+.
\]
We have
\[
\abs{H_r(z)} \le 1, \qquad r \in [0,1],\ z \in \C_+.
\]
Indeed, this is trivial for \( r = 0 \), and for fixed \( r \in (0,1] \) the
function \( H_r \) is (a restriction of) the M\"{o}bius transformation that maps
\( \C_+ \) onto the open unit ball in \( \C \).
Therefore, by~\eqref{eq:det-cd} it follows that
\[
\abs{ D_{\mu} } \ge \e^{-\re \mu} \abs{ p\mu+p^* } \abs{ q\mu +
  q^* } (\e^{2\re \mu} - 1) > 0,
\]
where in the last inequality we used the fact that \( \re \mu > 0 \).
Hence the system has a unique solution, and the remaining part of the lemma
follows by applying Cramer's rule.
\end{proof}

We are ready to prove that the resolvent of \( G_{p,q} \) exists.
To this end, for every \( \lambda \in \C \) we define the function \( \e_\lambda \) in
\( C[0,1] \) by
\[
\e_\lambda(x) \coloneqq \e^{\lambda x}, \qquad x \in [0,1].
\]
Moreover, for \( \lambda \in \C \setminus \set{ 0 } \) and \( f \in C[0,1] \) we
introduce \( h_\lambda = h_{\lambda,f} \in C[0,1] \) by the formula
\[
h_\lambda(x) \coloneqq \frac{1}{2\lambda} \int_0^1 \e^{-\lambda \abs{ x-y }}
f(y) \dd y, \qquad x \in [0,1].
\]

\begin{lemma}
\label{lem:res-exists}
The resolvent set of \( G_{p,q} \) contains \( \C \setminus (-\infty,0] \) and
\begin{equation}
\label{eq:explicit}
R(\lambda^2,G_{p,q}) f = c_\lambda \e_\lambda + d_\lambda \e_{-\lambda} +
h_{\lambda}, \qquad \lambda \in \C_+,\ f \in C[0,1], \qquad
\end{equation}
where \( c_{\lambda} \) and \( d_{\lambda} \) are given
by~\eqref{eq:clambda}-\eqref{eq:dlambda} with
\begin{equation}
\label{eq:albl}
a(\lambda) \coloneqq pf(0) \lambda^{-1} - h_{\lambda}(0) (p\lambda-p^*), \qquad
b(\lambda) \coloneqq qf(1) \lambda^{-1} - h_{\lambda}(1) (q\lambda - q^*).
\end{equation}
\end{lemma}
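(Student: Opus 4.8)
The plan is to verify directly that the function
\[
u \coloneqq c_\lambda \e_\lambda + d_\lambda \e_{-\lambda} + h_\lambda
\]
from the right-hand side of~\eqref{eq:explicit} belongs to \( D(G_{p,q}) \) and solves \( (\lambda^2 - G_{p,q})u = f \), and that it is the only such function. Since \( \lambda \mapsto \lambda^2 \) maps \( \C_+ \) bijectively onto \( \C \setminus (-\infty,0] \), establishing this for every \( \lambda \in \C_+ \) covers the whole claimed region of the resolvent set.

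First I would show that \( h_\lambda \) is a particular solution of the inhomogeneous equation \( \lambda^2 h - h'' = f \) on \( [0,1] \). This is expected because \( \tfrac{1}{2\lambda}\e^{-\lambda\abs{\,\cdot\,}} \) is the decaying Green's function of \( \lambda^2 - \partial_{xx} \) on the line; concretely, splitting the kernel at \( y = x \) and differentiating twice under the integral sign gives \( h_\lambda'' = \lambda^2 h_\lambda - f \), so in particular \( h_\lambda \in C^2[0,1] \). Because \( \e_{\pm\lambda} \) span the solution space of \( \lambda^2 u - u'' = 0 \), the function \( u \) above solves \( \lambda^2 u - u'' = f \) and lies in \( C^2[0,1] \) for \emph{every} choice of \( c_\lambda, d_\lambda \).

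The substance of the argument is to show that the boundary conditions~\eqref{eq:boundary-segment} translate into exactly the system~\eqref{eq:system} with \( \mu = \lambda \) and \( a, b \) as in~\eqref{eq:albl}. The key step is to eliminate the second derivatives using the equation itself: from \( u''(0) = \lambda^2 u(0) - f(0) \) and \( u''(1) = \lambda^2 u(1) - f(1) \), the conditions \( pu''(0) - p^* u'(0) = 0 \) and \( qu''(1) + q^* u'(1) = 0 \) become
\[
p\lambda^2 u(0) - p^* u'(0) = pf(0), \qquad q\lambda^2 u(1) + q^* u'(1) = qf(1).
\]
Inserting the endpoint values of \( u \) and \( u' \) and dividing by \( \lambda \) produces the matrix rows \( (p\lambda - p^*,\, p\lambda + p^*) \) and \( (\e^{\lambda}(q\lambda + q^*),\, \e^{-\lambda}(q\lambda - q^*)) \). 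To identify the right-hand side I would use the two identities \( h_\lambda'(0) = \lambda h_\lambda(0) \) and \( h_\lambda'(1) = -\lambda h_\lambda(1) \), which follow by evaluating the split kernel at the endpoints, where one of the two integration ranges degenerates to a point. These collapse the \( h_\lambda \)-contributions so that the right-hand side reduces precisely to \( a(\lambda) = pf(0)\lambda^{-1} - h_\lambda(0)(p\lambda - p^*) \) and \( b(\lambda) = qf(1)\lambda^{-1} - h_\lambda(1)(q\lambda - q^*) \), matching~\eqref{eq:albl}.

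With the system identified, Lemma~\ref{lem:det} supplies, for every \( \lambda \in \C_+ \), a unique pair \( (c_\lambda, d_\lambda) \) given by~\eqref{eq:clambda}--\eqref{eq:dlambda}; hence \( u \in D(G_{p,q}) \) and \( (\lambda^2 - G_{p,q})u = f \), proving surjectivity. Injectivity follows from the same lemma with \( f = 0 \): any kernel element is a combination \( c\e_\lambda + d\e_{-\lambda} \) whose coefficients solve the homogeneous system, forcing \( c = d = 0 \). Boundedness of the inverse is read off the formula, since \( h_\lambda \) and the coefficients depend boundedly on \( f \) through the fixed denominators of Lemma~\ref{lem:det}. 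The main obstacle is the bookkeeping of the third paragraph: one must check that, after substituting the ODE into~\eqref{eq:boundary-segment}, the matrix entries carry \( \lambda \) rather than \( \lambda^2 \), and the cancellation that makes this happen hinges precisely on the endpoint identities for \( h_\lambda' \).
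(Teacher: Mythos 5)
Your proposal is correct and follows essentially the same route as the paper's proof: reduce to $\lambda^2$ with $\lambda \in \C_+$, use $h_\lambda$ as a particular solution so that every solution of $\lambda^2 g - g'' = f$ has the form $c\e_\lambda + d\e_{-\lambda} + h_\lambda$, translate the boundary conditions~\eqref{eq:boundary-segment} into the system~\eqref{eq:system} with data~\eqref{eq:albl}, and invoke Lemma~\ref{lem:det} for unique solvability and the bound. The only difference is one of detail: you make explicit the substitution $u'' = \lambda^2 u - f$ and the endpoint identities $h_\lambda'(0) = \lambda h_\lambda(0)$, $h_\lambda'(1) = -\lambda h_\lambda(1)$, which the paper leaves implicit in its ``if and only if'' step.
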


\begin{proof}
Since each complex number in \( \C \setminus (-\infty,0] \) has a~unique square
root with positive real part, to prove that \( \rho(G_{p,q}) \) contains
\( \C \setminus (-\infty,0] \) it is enough to show that the resolvent
\( R(\lambda^2,G_{p,q}) \) exists for all \( \lambda \in \C_+ \).
Hence, fix \( \lambda \in \C_+ \) and let \( f \in C[0,1] \).
Observe that the function \( h_{\lambda} \) belongs to \( C^2[0,1] \) and
\( \lambda^2 h_{\lambda} - h_{\lambda}'' = f \).
Therefore, each \( g \in C^2[0,1] \) satisfying \( \lambda^2 g - g'' = f \) may
be written in the form
\[
g = c_\lambda \e_\lambda + d_\lambda \e_{-\lambda} + h_\lambda
\]
for some \( c_\lambda,d_\lambda \in \C \).
It follows that \( g \) belongs to \( D(G_{p,q}) \),
see~\eqref{eq:boundary-segment}, if and only if~\eqref{eq:system} holds with
\( a \) and \( b \) as in~\eqref{eq:albl}.
Consequently, Lemma~\ref{lem:det} implies that \( g \in D(G_{p,q}) \) for the
unique pair \( (c_{\lambda}, d_{\lambda}) \) given
by~\eqref{eq:clambda}-\eqref{eq:dlambda} with \( a \) and \( b \) as
in~\eqref{eq:albl}.
Moreover, there is an \( M = M(\lambda) > 0 \) such that
\( \norm{ g }_{C[0,1]} \le M \norm{ f }_{C[0,1]} \), which proves that
\( \lambda^2 \) belongs to the resolvent set of \( G_{p,q} \) and
\( R(\lambda^2,G_{p,q}) = g \).
\end{proof}

Using the explicit formula obtained in Lemma~\ref{lem:res-exists}, we calculate
the strong limit of the resolvent \( R(\lambda,G_{p,q}) \) as \( \lambda \)
converges to \( 0 \).

\begin{proposition}
\label{prop:limit-at-0}
We have
\[
\lim_{\lambda \to 0} \lambda R(\lambda,G_{p,q}) f = P_{p,q} f, \qquad f \in
C[0,1],
\]
in \( C[0,1] \), where the limit is taken over
\( \lambda \in \C \setminus (-\infty,0] \).
\end{proposition}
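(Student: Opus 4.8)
The plan is to argue directly from the explicit resolvent formula of Lemma~\ref{lem:res-exists}. Since every element of \( \C \setminus (-\infty,0] \) has a unique square root in \( \C_+ \), writing the spectral parameter as \( \lambda^2 \) with \( \lambda \in \C_+ \) reduces the claim to
\[
\lim_{\lambda \to 0,\ \lambda \in \C_+} \lambda^2 R(\lambda^2,G_{p,q}) f = P_{p,q} f \quad \text{in } C[0,1].
\]
By~\eqref{eq:explicit} I would split
\[
\lambda^2 R(\lambda^2,G_{p,q}) f = \lambda^2 c_\lambda \e_\lambda + \lambda^2 d_\lambda \e_{-\lambda} + \lambda^2 h_\lambda,
\]
and treat the three summands separately. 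The last one is negligible: since \( \abs{ \e^{-\lambda\abs{x-y}} } \le 1 \) for \( \re \lambda > 0 \), we get \( \norm{ \lambda^2 h_\lambda }_{C[0,1]} \le \tfrac12 \abs{\lambda}\, \norm{ f }_{C[0,1]} \to 0 \). Thus everything hinges on the scalars \( \lambda^2 c_\lambda \) and \( \lambda^2 d_\lambda \).

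Next I would expand the determinant \( D_\lambda \) from~\eqref{eq:det-cd} as \( \lambda \to 0 \). Since \( D_{-\lambda} = -D_\lambda \), only odd powers survive, and a short Taylor computation gives \( D_\lambda = 2\lrp{ pq^* + p^*q + p^*q^* }\lambda + O(\lambda^3) \). The leading coefficient is precisely the denominator in the definition of \( P_{p,q} \), and it vanishes exactly when \( p = q = 1 \). In parallel, from~\eqref{eq:albl} together with \( h_\lambda(0), h_\lambda(1) = \tfrac{1}{2\lambda}\int_0^1 f + O(1) \), one reads off
\[
a(\lambda) = \lambda^{-1}\lrp[\big]{ pf(0) + \tfrac12 p^*\!\int_0^1 f } + O(1), \qquad b(\lambda) = \lambda^{-1}\lrp[\big]{ qf(1) + \tfrac12 q^*\!\int_0^1 f } + O(1).
\]
This naturally splits the argument into two cases.

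In the case \( p \neq 1 \) or \( q \neq 1 \) the leading coefficient is positive, so \( D_\lambda \sim 2(pq^* + p^*q + p^*q^*)\lambda \). Substituting the leading parts of \( a(\lambda), b(\lambda) \) into~\eqref{eq:clambda}--\eqref{eq:dlambda}, a direct computation shows that the numerators of both \( c_\lambda \) and \( d_\lambda \) equal \( \lambda^{-1}\lrp[\big]{ pq^* f(0) + p^* q f(1) + p^* q^* \int_0^1 f } + O(1) \), whence \( \lambda^2 c_\lambda \) and \( \lambda^2 d_\lambda \) each converge to \( \tfrac12 P_{p,q} f \). Since \( \e_{\pm\lambda} \) converge uniformly on \( [0,1] \) to the constant function \( 1 \), the two exponential terms add up to \( P_{p,q} f \); identifying the limit with the stated formula is just the rearrangement of \( pq^* f(0) + p^* q f(1) + p^* q^* \int_0^1 f \) over \( pq^* + p^*q + p^*q^* \).

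The delicate case is \( p = q = 1 \), where \( p^* = q^* = 0 \), the determinant \( D_\lambda = 2\lambda^2 \sinh\lambda = 2\lambda^3 + O(\lambda^5) \) vanishes to third order, and the target \( P_{1,1} f \) is the \emph{affine} function \( x \mapsto (1-x)f(0) + x f(1) \). Here replacing \( \e_{\pm\lambda} \) by \( 1 \) is too crude, since the affine profile must emerge from the hyperbolic sines. Concretely, with \( p = q = 1 \) the numerators in~\eqref{eq:clambda}--\eqref{eq:dlambda} factor through \( \lambda \), and collecting the exponentials I would rewrite
\[
\lambda^2 c_\lambda \e^{\lambda x} + \lambda^2 d_\lambda \e^{-\lambda x} = \frac{\lambda}{\sinh\lambda}\lrp[\big]{ a(\lambda)\sinh(\lambda(1-x)) + b(\lambda)\sinh(\lambda x) }.
\]
Using \( \lambda a(\lambda) \to f(0) \), \( \lambda b(\lambda) \to f(1) \), and \( \sinh(\lambda(1-x))/\sinh\lambda \to 1-x \), \( \sinh(\lambda x)/\sinh\lambda \to x \), all uniformly in \( x \in [0,1] \), the right-hand side tends to \( (1-x)f(0) + x f(1) = P_{1,1} f \). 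I expect this case to be the one real obstacle: one must retain the first-order dependence on \( x \) carried by the hyperbolic sines rather than passing to pointwise limits, and one must check that the discarded remainders --- the \( O(\lambda^3) \) tail of \( D_\lambda \) and the \( O(1) \) corrections to \( a(\lambda), b(\lambda) \) --- genuinely vanish after multiplication by \( \lambda^2 \) and division by \( D_\lambda \). Since all the limits are uniform in \( x \in [0,1] \) and valid for complex \( \lambda \to 0 \) with \( \re \lambda > 0 \), the pointwise computation promotes to the asserted convergence in \( C[0,1] \) over \( \lambda \in \C \setminus (-\infty,0] \).
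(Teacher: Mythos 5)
Your proposal is correct and takes essentially the same route as the paper's proof: the reduction to \( \lambda^2 R(\lambda^2,G_{p,q}) \) with \( \lambda \in \C_+ \), discarding \( \lambda^2 h_\lambda \), and the identical two-case analysis (Taylor expansion of the Cramer numerators and of the denominator when \( p \neq 1 \) or \( q \neq 1 \); the hyperbolic-sine ratios, which are exactly the paper's functions \( E_\lambda \), when \( p = q = 1 \)). One bookkeeping remark: the quantity you expand as \( 2(pq^*+p^*q+p^*q^*)\lambda + O(\lambda^3) \) is the denominator appearing in~\eqref{eq:clambda}--\eqref{eq:dlambda}, which is the \emph{negative} of the determinant~\eqref{eq:det-cd}; since you consistently divide by that denominator, your conclusion is unaffected.
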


\begin{proof}
By the same reason as in the beginning of the proof of
Lemma~\ref{lem:res-exists}, it suffices to find the limit of
\( \lambda^2 R(\lambda^2,G_{p,q}) f \) as \( \lambda \to 0 \) in \( \C_+ \).
Furthermore, since \( \lambda^2 h_\lambda \) converges to zero as
\( \lambda \to 0 \), by~\eqref{eq:explicit} we are left with calculating the
limit of \( \lambda^2 (c_\lambda \e_\lambda + d_\lambda \e_{-\lambda}) \).

We split the proof into two cases depending on whether both barriers are
absorbing, that is \( p=q=1 \), or not.

\emph{Case} 1: Suppose \( p \neq 1 \) or \( q \neq 1 \).
We have
\[
h_\lambda(0) = \frac{1}{2\lambda} \int_0^1 (1 + O(\lambda)) f(x) \dd x =
\frac{1}{2\lambda} \int_0^1 f + O(1),
\]
and similarly
\[
h_{\lambda}(1) = \frac 1{2 \lambda} \int_0^1 f + O(1);
\]
here, and in what follows in the proof, when we use the big-O notation we always
mean ``as \( \lambda \to 0 \) in \( \C_+ \).''
Hence, we may rewrite \( a(\lambda) \) and \( b(\lambda) \) given
by~\eqref{eq:albl} in the asymptotic form
\[
a(\lambda) = \frac{pf(0)}{\lambda} + \frac{p^*}{2\lambda} \int_0^1 f + O(1),
\qquad b(\lambda) = \frac{qf(1)}{\lambda} + \frac{q^*}{2\lambda} \int_0^1 f +
O(1).
\]
Denoting by \( D_\lambda \) the determinant of the coefficient matrix
in~\eqref{eq:system}, we have
\begin{align*}
\lambda c_\lambda &= \frac{- \e^{-\lambda} pq^*f(0) - \e^{-\lambda} \frac{p^*q^*}{2} \int_0^1 f -
  p^*qf(1) - \frac{p^*q^*}{2} \int_0^1 f + O(\lambda)}{D_\lambda}\\
&= - \frac{pq^*f(0) + p^*qf(1) + p^*q^* \int_0^1 f + O(\lambda)}{D_{\lambda}}.
\end{align*}
Since
\[
D_\lambda = (\e^{-\lambda} - \e^\lambda) p^*q^* - 2\lambda pq^* - 2\lambda p^*q
+ O(\lambda^2) = -2\lambda(p^*q^*+pq^*+p^*q) + O(\lambda^2),
\]
this leads to
\[
\lambda c_{\lambda} = \frac{1}{2\lambda} P_{p,q} f + O(1).
\]
Similarly, the same relation holds with \( c_\lambda \) replaced by
\( d_\lambda \), and consequently we easily check that
\[
\norm{ \lambda^2 (c_{\lambda} \e_{\lambda} + d_\lambda \e_{-\lambda}) - P_{p,q}
  f }_{C[0,1]} = O(\lambda).
\]

\emph{Case} 2: Suppose \( p = q = 1 \).
Then by Lemma~\ref{lem:res-exists} it follows that
\[
\lambda^2 c_{\lambda} = \frac{\e^{-\lambda} f(0) - f(1) - \lambda^2\e^{-\lambda} h_{\lambda}(0) +
  \lambda^2 h_{\lambda}(1)}{\e^{-\lambda} - \e^{\lambda}},
\]
and
\[
\lambda^2 d_{\lambda} = \frac{-\e^{\lambda} f(0) + f(1) + \lambda^2 \e^{\lambda} h_{\lambda}(0) -
  \lambda^2 h_{\lambda}(1)}{\e^{-\lambda} - \e^{\lambda}}.
\]
Denoting
\[
E_{\lambda}(x) \coloneqq \frac{\e^{-\lambda x} - \e^{\lambda x}}{\e^{-\lambda} - \e^{\lambda}},
\qquad x \in [0,1],
\]
for all \( x \in [0,1] \) we have
\begin{align*}
\lambda^2 \lrb{c_{\lambda} \e_{\lambda}(x) + d_{\lambda} \e_{-\lambda}(x)} = \lrb{f(0) - \lambda^2
  h_{\lambda}(0)} E_{\lambda}(1-x) + \lrb{f(1) - \lambda^2 h_{\lambda}(1)} E_{\lambda}(x).
\end{align*}
Since \( \sup_{x \in [0,1]} \abs{E_{\lambda}(x) - x} = O(\lambda) \), this leads
to
\[
\norm{ \lambda^2 (c_{\lambda} \e_{\lambda}(x) + d_{\lambda} \e_{-\lambda}) -
  P_{1,1} f }_{C[0,1]} = O(\lambda),
\]
which completes the proof.
\end{proof}

\begin{proposition}
\label{prop:sectorial-interval}
The operator \( G_{p,q} \) is sectorial with angle \( \pi/2 \).
\end{proposition}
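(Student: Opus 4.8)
The plan is to read off the sectoriality estimate directly from the explicit resolvent representation in Lemma~\ref{lem:res-exists}. Recall that $G_{p,q}$ is sectorial of angle $\pi/2$ once its resolvent set contains $\C \setminus (-\infty,0]$ — which Lemma~\ref{lem:res-exists} already provides — and, for every $\theta \in (0,\pi/2)$, there is a constant $M_\theta$ with $\norm{ R(\lambda,G_{p,q}) } \le M_\theta / \abs{ \lambda }$ on the sector $\abs{ \arg \lambda } \le \pi/2 + \theta$. First I would pass to $\mu \coloneqq \sqrt{\lambda} \in \C_+$: as $\lambda$ runs over the sector of half-angle $\pi/2 + \theta$, the root $\mu$ runs over $\abs{ \arg \mu } \le \pi/4 + \theta/2 \eqqcolon \phi < \pi/2$, where $\re \mu \ge (\cos \phi)\abs{ \mu } \eqqcolon c \abs{ \mu }$ with $c > 0$. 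Since $\abs{ \lambda } = \abs{ \mu }^2$, it suffices to bound each summand of $R(\mu^2,G_{p,q})f = c_\mu \e_\mu + d_\mu \e_{-\mu} + h_\mu$ from~\eqref{eq:explicit} by $M \norm{ f }_{C[0,1]} / \abs{ \mu }^2$ on this $\mu$-sector.

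The bulk term is immediate: from its definition
\[
\abs{ h_\mu(x) } \le \frac{1}{2\abs{ \mu }} \int_0^1 \e^{-\re \mu \, \abs{ x-y }} \abs{ f(y) } \dd y \le \frac{\norm{ f }_{C[0,1]}}{\abs{ \mu } \re \mu} \le \frac{\norm{ f }_{C[0,1]}}{c \abs{ \mu }^2},
\]
so $\norm{ h_\mu }_{C[0,1]} = O(\abs{ \mu }^{-2})$. For the two boundary layers note that $\norm{ c_\mu \e_\mu }_{C[0,1]} = \abs{ c_\mu } \e^{\re \mu}$ and $\norm{ d_\mu \e_{-\mu} }_{C[0,1]} = \abs{ d_\mu }$ (as $\re \mu > 0$), so the task reduces to showing that the \emph{heights} $\abs{ c_\mu } \e^{\re \mu}$ and $\abs{ d_\mu }$ are $O(\norm{ f }_{C[0,1]}\abs{ \mu }^{-2})$. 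Here the coarse estimate~\eqref{eq:det-estimate} is not quite enough — it loses a factor $\abs{ \mu }^{-1}$ when $p,q \neq 0$ — and reconciling this with the degenerate cases $p=0$ or $q=0$ is the main obstacle. The remedy is to normalise the data: set $A \coloneqq a(\mu)/(p\mu + p^*)$ and $B \coloneqq b(\mu)/(q\mu + q^*)$, and to prove $\abs{ A }, \abs{ B } = O(\norm{ f }_{C[0,1]}\abs{ \mu }^{-2})$ \emph{uniformly in} $p,q \in [0,1]$.

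Writing $H_r(\mu) = (r\mu - r^*)/(r\mu + r^*)$ as in the proof of Lemma~\ref{lem:det}, so that $\abs{ H_r(\mu) } \le 1$, formula~\eqref{eq:albl} gives $A = pf(0)/(\mu(p\mu+p^*)) - h_\mu(0)H_p(\mu)$, whence $\abs{ A } \le p\abs{ f(0) }/(\abs{ \mu }\abs{ p\mu + p^* }) + \abs{ h_\mu(0) }\abs{ H_p(\mu) }$. The first summand is $O(\abs{ \mu }^{-2})$ because $\abs{ p\mu + p^* } \ge \re(p\mu + p^*) \ge p \re \mu \ge pc\abs{ \mu }$ when $p \neq 0$ and it vanishes identically when $p = 0$; the second is $O(\abs{ \mu }^{-2})$ because $\abs{ h_\mu(0) } \le \norm{ f }_{C[0,1]}/(2\abs{ \mu }\re \mu)$ and $\abs{ H_p } \le 1$. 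The argument for $B$ is symmetric. This is the step that genuinely requires care, since it is the division by $p\mu+p^*$ (resp.\ $q\mu+q^*$) together with the bound $\abs{ H_r }\le 1$ that absorbs the missing power of $\abs{ \mu }$ simultaneously in the sticky, absorbing, and reflecting regimes.

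Finally I would feed $A$ and $B$ into~\eqref{eq:clambda}–\eqref{eq:dlambda}. Factoring the determinant as in~\eqref{eq:det-cd}, namely $D_\mu = -\e^{\mu}(p\mu+p^*)(q\mu+q^*)\bigl[ 1 - H_p(\mu)H_q(\mu)\e^{-2\mu} \bigr]$, a short computation turns the quotients into
\[
c_\mu \e^{\mu} = \frac{-B + H_q(\mu)\e^{-\mu} A}{1 - H_p(\mu)H_q(\mu)\e^{-2\mu}}, \qquad d_\mu = \frac{-A + H_p(\mu)\e^{-\mu} B}{1 - H_p(\mu)H_q(\mu)\e^{-2\mu}}.
\]
Since $\abs{ H_p(\mu)H_q(\mu)\e^{-2\mu} } \le \e^{-2\re \mu} \le 1/2$ once $\re \mu$ is large (which holds for $\abs{ \mu }$ large because $\re \mu \ge c\abs{ \mu }$), the denominator exceeds $1/2$ in modulus, so $\abs{ c_\mu }\e^{\re \mu} \le 2(\abs{ B } + \e^{-\re \mu}\abs{ A })$ and $\abs{ d_\mu } \le 2(\abs{ A } + \e^{-\re \mu}\abs{ B })$ are both $O(\norm{ f }_{C[0,1]}\abs{ \mu }^{-2})$. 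Combining the three estimates gives $\norm{ R(\lambda,G_{p,q}) } \le M_\theta/\abs{ \lambda }$ for all large $\abs{ \lambda }$ in the sector; on the remaining bounded part of the closed sector $\lambda \mapsto \lambda R(\lambda,G_{p,q})$ is holomorphic and, by Proposition~\ref{prop:limit-at-0} (whose proof yields $\norm{ \lambda R(\lambda,G_{p,q}) } \le \norm{ P_{p,q} } + O(\abs{ \lambda }^{1/2})$ as $\lambda \to 0$), stays bounded near the origin, hence is bounded there as well. This establishes the resolvent bound on the whole sector and therefore sectoriality of angle $\pi/2$.
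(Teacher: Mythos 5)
Your proposal is correct and takes essentially the same route as the paper: the bounded part of the sector is handled, as in the paper, by analyticity of the resolvent together with Proposition~\ref{prop:limit-at-0}, while the estimate at infinity comes from the explicit resolvent of Lemma~\ref{lem:res-exists} after substituting \( \lambda \mapsto \lambda^2 \) -- your normalized quantities \( A = a(\mu)/(p\mu+p^*) \), \( B = b(\mu)/(q\mu+q^*) \) together with \( \abs{H_r(\mu)} \le 1 \) are precisely the computation hidden behind the paper's terse claims \( c_\lambda = O(\lambda^{-2}\e^{-\lambda}) \) and \( d_\lambda = O(\lambda^{-2}) \), with the added benefit of uniformity in \( p,q \). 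The only blemish is a harmless sign slip in your displayed formulas for \( c_\mu \e^{\mu} \) and \( d_\mu \): you paired the numerators of~\eqref{eq:clambda}--\eqref{eq:dlambda} with the determinant in the form~\eqref{eq:det-cd}, whose sign convention is opposite to that of the denominators actually appearing in~\eqref{eq:clambda}--\eqref{eq:dlambda}; this is immaterial, since only the moduli \( \abs{c_\mu}\e^{\re\mu} \) and \( \abs{d_\mu} \) enter the estimates.
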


\begin{proof}
By Lemma~\ref{lem:res-exists} we are left with proving that for each
\( \delta \in (0,\pi/2) \) there exists \( M > 0 \) such that
\begin{equation}
\label{eq:res-sectoriality}
\norm{ R(\lambda,G_{p,q}) }_{\lino{C[0,1]}} \le \frac M{\abs{ \lambda }}, \qquad
\lambda \in \Sigma_{\pi/2 + \delta}.
\end{equation}
Since the resolvent is an analytic function on the resolvent set,
see~\cite[Proposition~IV.1.3]{MR1721989}, by Proposition~\ref{prop:limit-at-0}
it follows that the function
\( \lambda \mapsto \lambda \norm{ R(\lambda,G_{p,q}) }_{\lino{C[0,1]}} \) is
bounded in every bounded subset of \( \Sigma_{\pi/2+\delta} \).

We estimate the resolvent ``at infinity''.
Note that it suffices to prove~\eqref{eq:res-sectoriality} in the sector
\( \Sigma_{\pi/4+\delta/2} \) with \( \lambda \) replaced by \( \lambda^2 \).
Let \( f \in C[0,1] \) be such that \( \norm{ f }_{C[0,1]} \le 1 \).
Lemma~\ref{lem:res-exists} implies that
\begin{equation}
\label{eq:res-asy}
\norm{ R(\lambda^2,G_{p,q}) f }_{C[0,1]} \le \abs{ c_\lambda } \e^{\re \lambda}
+ \abs{ d_\lambda } + \norm{ h_\lambda }_{C[0,1]}, \qquad \lambda \in \C_+.
\end{equation}
For every \( \lambda \in \Sigma_{\pi/4+\delta/2} \) we have
\( \re \lambda \ge \cos(\pi/4+\delta/2) \abs{ \lambda } \), hence
\[
\norm{ h_\lambda }_{C[0,1]} \le \frac{1}{2 \abs{ \lambda } } \sup_{x \in [0,1]}
\int_0^1 \e^{-\re \lambda \abs{ x-y }} \dd y = O(\lambda^{-2})
\]
uniformly in \( f \); here, and in what follows in the proof, when we use the
big-O~notation we always mean ``as \( \abs{ \lambda } \to +\infty \) in the
sector \( \Sigma_{\pi/4+\delta/2} \).''
This leads to
\[
c_\lambda = \frac{O(\lambda^{-2})}{\e^\lambda + O(1)} = O(\lambda^{-2}
\e^{-\lambda}), \qquad d_\lambda = \frac{O(\lambda^{-2}
  \e^{\lambda})}{\e^\lambda + O(1)} = O(\lambda^{-2})
\]
uniformly in \( f \).
Using~\eqref{eq:res-asy} we obtain
\( \norm{R(\lambda^2,G_{p,q})}_{\lino{C[0,1]}} = O(\lambda^{-2}) \) as desired.
\end{proof}

\begin{lemma}
\label{lem:dense-derivative}
Let \( a_0 \), \( a_1 \), \( b_0 \) and \( b_1 \) be real numbers.
For every \( \epsilon > 0 \) there exists a function \( f \in C^2[0,1] \)
satisfying
\begin{align}
f'(0) &= a_0, & f''(0) &= b_0,\label{eq:d-0}\\
f'(1) &= a_1, & f''(1) &= b_1,\label{eq:d-1}
\end{align}
and such that
\[
\norm{ f }_{C[0,1]} \le \epsilon.
\]
\end{lemma}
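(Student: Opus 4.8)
The plan is to exploit that, with respect to the sup norm, the point-evaluations $f \mapsto f'(0)$ and $f \mapsto f''(0)$ are \emph{unbounded} functionals: one can therefore hold these derivative values fixed while driving $\norm{f}_{C[0,1]}$ to zero by means of high-frequency oscillations. The decisive observation is that $\tfrac{1}{n}\sin(nx)$ has amplitude $O(1/n)$ yet first derivative $1$ at $0$, and $\tfrac{1}{n^2}(1-\cos(nx))$ has amplitude $O(1/n^2)$ yet second derivative $1$ at $0$; this is the whole content of the lemma, and finding this ansatz is the only nonroutine step. I would first reduce to a single endpoint by seeking $f = g_L + g_R$ with $g_L$ supported in $[0,1/2]$ and $g_R$ supported in $[1/2,1]$. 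Then $g_R$ and its derivatives vanish at $0$ while $g_L$ and its derivatives vanish at $1$, so $f$ inherits $f'(0)=a_0,\,f''(0)=b_0$ from $g_L$ and $f'(1)=a_1,\,f''(1)=b_1$ from $g_R$, while $\norm{f}_{C[0,1]} \le \norm{g_L}_{C[0,1]} + \norm{g_R}_{C[0,1]}$.

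For the left endpoint, fix once and for all a cutoff $\phi \in C^2[0,1]$ with $0 \le \phi \le 1$, $\phi \equiv 1$ on $[0,1/4]$ and $\phi \equiv 0$ on $[1/2,1]$, and for a positive integer $n$ set
\[
g_L(x) \coloneqq \phi(x)\lrp[\big]{\tfrac{a_0}{n}\sin(nx) + \tfrac{b_0}{n^2}(1-\cos(nx))}.
\]
Since $\phi$ is identically $1$ near $0$, on a neighbourhood of $0$ the function $g_L$ agrees with $h(x) = \tfrac{a_0}{n}\sin(nx) + \tfrac{b_0}{n^2}(1-\cos(nx))$, and a direct computation gives $h(0)=0$, $h'(0)=a_0$, $h''(0)=b_0$; hence $g_L'(0)=a_0$ and $g_L''(0)=b_0$. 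Moreover $\norm{g_L}_{C[0,1]} \le \norm{h}_{C[0,1]} \le \abs{a_0}/n + 2\abs{b_0}/n^2$.

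The right endpoint is handled symmetrically: with $\psi(x) \coloneqq \phi(1-x)$ and
\[
g_R(x) \coloneqq \psi(x)\lrp[\big]{\tfrac{a_1}{n}\sin(n(x-1)) + \tfrac{b_1}{n^2}(1-\cos(n(x-1)))},
\]
the same computation, now at $x=1$ where $\psi \equiv 1$, yields $g_R'(1)=a_1$, $g_R''(1)=b_1$ and $\norm{g_R}_{C[0,1]} \le \abs{a_1}/n + 2\abs{b_1}/n^2$. Setting $f \coloneqq g_L + g_R$, the supports overlap only at $x=1/2$, where both summands vanish together with their first two derivatives, so $f \in C^2[0,1]$ and the boundary conditions~\eqref{eq:d-0}--\eqref{eq:d-1} hold. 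Finally, choosing $n$ so large that $(\abs{a_0}+\abs{a_1})/n + 2(\abs{b_0}+\abs{b_1})/n^2 \le \epsilon$ forces $\norm{f}_{C[0,1]} \le \epsilon$, completing the construction. No step is a genuine obstacle once the oscillatory ansatz is in place; the only care needed is the $C^2$-matching at the junction $x=1/2$, which the cutoff arranges automatically.
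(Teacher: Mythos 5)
Your proof is correct, and it takes a genuinely different route from the paper's. The paper uses a global boundary-layer ansatz
\( f_{\gamma}(x) = \alpha_0 \e^{-\gamma x} + \beta_0 \e^{-\gamma^2 x} + \alpha_1 \e^{-\gamma(1-x)} + \beta_1 \e^{-\gamma^2(1-x)} \)
as in~\eqref{eq:fgamma}: since the four exponentials do not vanish at the opposite endpoints, the conditions~\eqref{eq:d-0}--\eqref{eq:d-1} couple into the \( 4 \times 4 \) linear system~\eqref{eq:systemgamma}, and the proof must verify that its determinant satisfies \( D_\gamma = -\gamma^{10} + O(\gamma^{8}) \neq 0 \) for large \( \gamma \) and that the resulting coefficients decay (\( \alpha_i = O(\gamma^{-1}) \), \( \beta_i = O(\gamma^{-3}) \)), which is what forces \( \norm{f_\gamma}_{C[0,1]} \to 0 \). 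You instead decouple the two endpoints with a \( C^2 \) cutoff and use the small-amplitude, high-frequency ansatz \( \tfrac{a_0}{n}\sin(nx) + \tfrac{b_0}{n^2}(1-\cos(nx)) \), whose coefficients are explicit, so no linear system, no determinant asymptotics, and no nonvanishing argument are needed; the sup-norm bound \( (\abs{a_0}+\abs{a_1})/n + 2(\abs{b_0}+\abs{b_1})/n^2 \) is immediate. What each buys: your argument is more elementary and self-contained, gives explicit error bounds, and generalizes trivially to prescribing finitely many higher derivatives at the endpoints (add more oscillatory terms); the paper's version avoids introducing a cutoff and fits the paper's existing machinery, since decaying exponentials and Cramer's-rule determinant estimates are exactly the tools of Lemma~\ref{lem:det} and the resolvent computations elsewhere in the text. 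One cosmetic remark: your worry about ``\( C^2 \)-matching at the junction \( x = 1/2 \)'' is vacuous, since \( g_L \) and \( g_R \) are each globally \( C^2 \) on \( [0,1] \) by construction (product of a \( C^2 \) cutoff with a smooth function), so their sum is automatically \( C^2 \); and a fully pedantic writeup would exhibit one explicit \( C^2 \) cutoff (e.g.\ a quintic Hermite spline on \( [1/4,1/2] \)), though this is standard.
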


\begin{proof}
Let \( f_{\gamma} \in C[0,1] \) be given by
\begin{equation}
\label{eq:fgamma}
f_{\gamma}(x) \coloneqq \alpha_0 \e^{-\gamma x} + \beta_0 \e^{-\gamma^2 x} +
\alpha_1 \e^{-\gamma(1-x)} + \beta_1 \e^{-\gamma^2(1-x)}, \qquad x \in [0,1],
\end{equation}
where \( \alpha_0 \), \( \alpha_1 \), \( \beta_0 \) and \( \beta_1 \) are real
numbers.
Such \( f_{\gamma} \) satisfies~\eqref{eq:d-0}--\eqref{eq:d-1} if and only if
\begin{equation}
\label{eq:systemgamma}
\begin{bmatrix}
-\gamma & -\gamma^2 & \gamma \e^{-\gamma} & \gamma^2 \e^{-\gamma^2}\\
\gamma^2 & \gamma^4 & \gamma^2 \e^{-\gamma} & \gamma^4 \e^{-\gamma^2}\\
-\gamma \e^{-\gamma} & -\gamma^2 \e^{-\gamma^2} & \gamma & \gamma^2\\
\gamma^2 \e^{-\gamma} & \gamma^4 \e^{-\gamma^2} & \gamma^2 & \gamma^4
\end{bmatrix}
\begin{bmatrix}
\alpha_0\\
\beta_0\\
\alpha_1\\
\beta_1
\end{bmatrix}
=
\begin{bmatrix}
a_0\\
b_0\\
a_1\\
b_1
\end{bmatrix}.
\end{equation}
For the determinant \( D_\gamma \) of the coefficients matrix we have
\[
D_{\gamma} = -\gamma^{10} + O(\gamma^8) \qquad \text{as
  \( \gamma \to +\infty \)}.
\]
Hence, for sufficiently large \( \gamma_0 > 0 \), it follows that
\( D_{\gamma} \neq 0 \) for \( \gamma > \gamma_0 \).
Consequently, for all \( \gamma > \gamma_0 \) we choose \( \alpha_0 \),
\( \alpha_1 \), \( \beta_0 \) and \( \beta_1 \) in such a way that
\( f_{\gamma} \) given by~\eqref{eq:fgamma}
satisfies~\eqref{eq:d-0}--\eqref{eq:d-1}.
Moreover, by~\eqref{eq:systemgamma} it follows that
\[
\alpha_0 = \alpha_1 = O(\gamma^{-1}), \qquad \beta_0 = \beta_1 = O(\gamma^{-3})
\qquad \text{as \( \gamma \to +\infty \).}
\]
Therefore, \( \lim_{\gamma \to +\infty} \norm{ f_{\gamma} }_{C[0,1]} = 0 \),
which completes the proof.
\end{proof}




\begin{proof}[Proof of Theorem~\textnormal{\ref{thm:feller-semigroup-interval}}]
By Lemma~\ref{lem:dense-derivative} it follows that the operator \( G_{p,q} \)
is densely defined.
Hence, by Proposition~\ref{prop:sectorial-interval}, it generates an analytic
semigroup \( \set{\e^{tG_{p,q}}}_{t \ge 0} \) in \( C[0,1] \).
We prove that \( G_{p,q} \) is a Feller generator.
By a~well-known characterization of Feller semigroups, see~\cite[Theorem~2.2,
p.~165]{MR838085}, it suffices to check that \( G_{p,q} \) satisfies the positive
maximum principle, that is: if \( f \in D(G_{p,q}) \) attains the maximum at
\( x \in [0,1] \), then \( f(x) \ge 0 \) implies \( G_{p,q}f(x) \le 0 \).
This is clear for \( x \in (0,1) \), and if \( f \) attains the nonnegative
maximum at \( x = 0 \) or \( x = 1 \), then \( f'(0) \le 0 \) or
\( f'(1) \ge 0 \), respectively.
Thus the claim follows, since \( f \) satisfies boundary
conditions~\eqref{eq:boundary-segment}.
Moreover, the function \( 1_{[0,1]} \) belongs to the domain of \( G_{p,q} \)
and \( G_{p,q} 1_{[0,1]} = 0 \), hence the semigroup is conservative.

To prove~\eqref{eq:feller-limit} note first that the limit on the left-hand side
exists, which follows by the sectoriality of the semigroup and
Proposition~\ref{prop:limit-at-0},
see~\cite[Corollary~32.1]{bobrowski-convergence}.
Let \( f \in C[0,1] \), and write
\[
f = (f - P_{p,q} f) + P_{p,q} f.
\]
By Proposition~\ref{prop:limit-at-0} we have
\( \lambda R(\lambda,G_{p,q}) (f - P_{p,q} f) \to P_{p,q}(f - P_{p,q}f) = 0 \)
as \( \lambda \) converges to zero in \( \C \setminus (-\infty,0] \).
However, the resolvent \( R(\lambda,G_{p,q}) \) is the Laplace transform of the
semigroup generated by \( G_{p,q} \), hence
\[
\lambda R(\lambda,G_{p,q}) = \int_0^{+\infty} \e^{-t} \e^{\frac{t}{\lambda}
  G_{p,q}} \dd t,
\]
which implies that \( \e^{tG_{p,q}}(f - P_{p,q} f) \to 0 \) as
\( t \to +\infty \).
Finally, because \( G_{p,q} P_{p,q} f = 0 \) for all \( p,q \in [0,1] \), we
have \( R(\lambda,G_{p,q}) P_{p,q} f = \lambda^{-1} P_{p,q} f \).
Therefore, by the Yosida approximation it follows that
\( \e^{tG_{p,q}} P_{p,q} f = P_{p,q}f \), which completes the proof.
\end{proof}

\bibliographystyle{amsplain}
\bibliography{references}

\end{document}